\documentclass[10pt,leqno]{amsart}
\usepackage{titlesec}
\titleformat{\subsection}
{\normalfont\bfseries}
{\thesubsection}
{1em}
{}
\titleformat{\section}
{\normalfont\Large\bfseries}
{\thesection\quad}
{0pt}
{}

\usepackage{epstopdf}
\usepackage[caption=false]{subfig}
\usepackage{mathtools}
\usepackage{float}
\usepackage{graphicx}
\usepackage{xcolor}

\usepackage{amsmath,amsfonts,amssymb}
\usepackage{bbm}
\usepackage{algorithm}
\usepackage{algpseudocode}
\usepackage{changes}

\definecolor{Tommaso}{rgb}{0, 0, 1}

\definecolor{Niklas}{rgb}{0, 0.5, 0.5}

\definecolor{Philipp}{rgb}{0, 0.39, 0}

\definecolor{David}{rgb}{0.55, 0.71, 0}

\newcommand{\LQ}[0]{\left[}
\newcommand{\RQ}[0]{\right]}
\newcommand{\EE}[0]{\mathbb{E}}
\newcommand{\PP}[0]{\mathbb{P}}

\DeclareMathOperator{\boxN}{\mathbf{\Box}^{\mathbb{N}}}
\newcommand{\bsxi}{{\boldsymbol{\xi}}}
\newcommand{\xb}{{\boldsymbol{x}}}

\newcommand{\abs}[1]{\left| #1 \right|}
\newcommand{\babs}[1]{\big| #1 \big|}

\newcommand{\norm}[1]{\left\Vert #1 \right\Vert}
\newcommand{\tnorm}[1]{\Vert #1 \Vert}
\newcommand{\bnorm}[1]{\big\Vert #1 \big\Vert}

\newcommand{\set}[1]{\left\lbrace #1 \right\rbrace}
\newcommand{\tset}[1]{\lbrace #1 \rbrace}

\theoremstyle{plain}
\newtheorem{theorem}{Theorem}[section]
\newtheorem{lemma}[theorem]{Lemma}
\newtheorem{corollary}[theorem]{Corollary}
\newtheorem{proposition}[theorem]{Proposition}

\theoremstyle{definition}

\theoremstyle{remark}

\usepackage[breaklinks=true,colorlinks=true,linkcolor=blue,citecolor=blue,urlcolor=cyan,pdfborder={0 0 0}]{hyperref}

\makeatletter
\renewcommand\paragraph[1]{\@startsection{paragraph}{4}{\parindent}%
{0.1ex \@plus 0.1ex \@minus 0.2ex}
{-1em}
{\normalfont\itshape\normalsize}*{#1.\quad}}
\makeatother

\hypersetup{
    colorlinks=true,
    linkcolor=blue,
    filecolor=blue,
    urlcolor=blue,
    citecolor=blue
}

\begin{document}
    \title[MLSGD for Risk-Averse PDE-Constrained Optimization]
    {\Large Multilevel Stochastic Gradient Descent for \\ Risk-Averse PDE-Constrained Optimization}
    \author{Niklas Baumgarten}
    \author{Philipp A.~Guth}
    \author{David Schneiderhan}
    \author{Tommaso Vanzan}
    \date{\today}

    \maketitle

    \let\thefootnote\relax\footnotetext{
        \noindent\texttt{niklas.baumgarten@uni-heidelberg.de}; Heidelberg University \\
        \indent\texttt{philipp.guth@ricam.oeaw.ac.at}; Johann Radon Institute for Computational and Applied Mathematics \\
        \indent\texttt{david.schneiderhan@kit.edu}; Karlsruhe Institute of Technology \\
        \indent\texttt{tommaso.vanzan@polito.it}; Politecnico di Torino
    }

    \begin{abstract}
    We present recent advances in applying and analyzing
    multilevel stochastic gradient descent algorithms to
    risk-averse, three-dimensional PDE-constrained optimization problems.
    The algorithm uses adaptive multilevel Monte Carlo gradient estimates,
    provides parallel scalability as well as improved convergence rates
    and computational complexity compared to
    standard batched stochastic gradient descent methods.
    We study the method in computationally demanding settings
    using three-dimensional elliptic diffusion problems
    and large risk-aversion parameters.
\end{abstract}
    \section{Introduction}
\label{sec:introduction}

Optimization problems governed by partial differential equations (PDEs)
under uncertainty arise in a wide range of applications,
including engineering design, geophysics, and medical applications.
In such settings, uncertainties in model parameters,
boundary conditions, or external forcing
propagate through the governing equations and significantly affect the system response.
As a consequence, optimization based solely on a single realization/estimate
or on expected values may lead to solutions that are highly
sensitive to rare but impactful events.
This has motivated the development of risk-averse formulations
for PDE-constrained optimization problems,
in which suitable risk measures are incorporated
into the objective functional to penalize undesirable outcomes.
Among the various approaches to quantify risk, coherent
and convex risk measures have received considerable attention
(see~\cite{heinkenschloss2025optimization} for a recent overview).
In particular, the entropic risk measure
provides a monotonic transition between
the expectation and worst-case behavior, controlled by a risk-aversion parameter.
Furthermore, for each fixed parameter, it is strictly convex and smooth.

The resulting optimization problems, even in the risk-neutral case,
are computationally demanding, as they involve high-dimensional integration
over the underlying probability space combined with the repeated solution of PDEs.
Beyond sample average approximations~\cite{kouri2013trust,guth2021quasi,guth2024parabolic,doi:10.1137/22M1512636,doi:10.1137/22M1532263},
a widely used class of methods for such problems are stochastic gradient
methods (SGD) ~\cite{geiersbach2019projected, geiersbach2020stochastic, martin2021complexity},
which approximate gradients of the objective functional using a single realization or small batches drawn at each iteration. More recently, variants that adapt batch sizes throughout the optimization process have gained significant attention \cite{bollapragada2018adaptive,beiser2023adaptive}.
However, despite avoiding discretization errors arising from a finite representation of the sample space, the efficiency of these methods is often limited by the high variance of gradient estimators,
especially in a risk-averse setting.
To mitigate these issues, variance reduction techniques are essential see,
e.g.,~\cite{pieraccini2025adaptive,nobile2025stochasticgradientleastsquarescontrol}.

In this work, we adapt and analyze an adaptive multilevel stochastic gradient descent (MLSGD) method,
as introduced in~\cite{baumgarten2025multilevel},
for PDE-constrained optimization problems involving the entropic risk-measure.
The approach combines an adaptive stochastic gradient method with
multilevel Monte Carlo (MLMC) techniques~\cite{Giles_2015},
which exploit a hierarchy of discretizations to achieve significant
variance reduction at an optimal computational cost.
MLMC methods are well established for forward and inverse uncertainty
quantification~\cite{cliffe2011multilevel,teckentrup2013further,barth2011multi,giles2016multilevel}
and have been successfully applied to risk-neutral optimal control
problems~\cite{ali2017multilevel,ciaramella2024multigrid,van2019robust,guth2023multilevel,nobile2025multilevel},
as well as to the risk-averse settings. In particular, \cite{ganesh2023gradient}
presents an adaptive stochastic gradient algorithm based on MLMC gradient estimates for the conditional value-at-risk, which is another popular risk-measure.

Building on the aforementioned works, the main contributions of this paper are threefold:
First, we propose a MLSGD algorithm based on multilevel
gradient estimators and adaptive batch sizes tailored to the entropic risk measure.
At each optimization step, the method dynamically balances discretization
and sampling errors, selecting optimal batch sizes and discretization levels.
To achieve this, we investigate the influence of
the risk-aversion parameter on the algorithm and its adaptivity.

 Second, we provide a rigorous convergence analysis of the proposed method.
Our study shows that, under suitable assumptions,
the algorithm converges linearly in the number of optimization steps and in expectation.
In addition, we discuss the algorithmic complexity and show that an idealized variant of our algorithm achieves
the same asymptotic complexity as MLMC applied to forward problems,
up to constants depending on the risk-aversion parameter.

 Third, we address the significant computational challenges
arising from large-scale three-dimensional PDEs and high risk-aversion parameters.
To this end, we present an efficient parallel implementation that
combines sample-wise parallelism on coarse levels with
domain decomposition techniques on fine levels.
We build upon the distributed multilevel data structure
proposed in~\cite{baumgarten2025budgeted} and improve the memory layout,
enabling scalability also on three spatial dimensions
and allowing for the treatment of large batch sizes
and high-fidelity discretizations
encountered in practical applications.

Our theoretical findings are supported by numerical
experiments for three-dimensional elliptic diffusion
problems with random coefficients.
These experiments confirm the predicted convergence
rates and demonstrate the robustness and efficiency
of the method across different regimes of risk-aversion.

The remainder of the paper is organized as follows.
In Section~\ref{sec:risk-averse-pde-constrained-optimization},
we introduce the risk-averse PDE-constrained optimization problem
and recall properties of the entropic risk measure.
Section~\ref{sec:convergence-analysis} is devoted to
the convergence and complexity analysis of the MLSGD method.
In Section~\ref{sec:algorithmic-and-implementation-details},
we discuss algorithmic aspects and present the parallel implementation.
Numerical results are reported in Section~\ref{sec:numerical-experiments},
followed by concluding remarks and perspectives for future work
in Section~\ref{sec:outlook-and-conclusion}.

    \section{Risk-Averse PDE-constrained Optimization}
\label{sec:risk-averse-pde-constrained-optimization}

We start by presenting the PDE-constrained optimization problem,
introducing the entropic risk measure, and outlining the optimization algorithm.
To this end, let~$H$ be a separable Hilbert space representing a state space,
and assume that it serves as a pivot space, that is,
we will identify it with its topological dual,~$H = H'$.
Furthermore, let~$V$ be another separable Hilbert space that is continuously
and densely embedded in~$H$, leading to the Gelfand triplet~$V \hookrightarrow H \hookrightarrow V'$.
In addition, let $U = U'$ be a third separable Hilbert space,
and we denote by $U_{\mathrm{ad}}$ a nonempty, closed, convex and bounded subset of $U$.
As an example, one may consider a Lipschitz domain $\mathcal{D}$
and the Sobolev spaces $V=H^1_0(\mathcal{D})$, $H=L^2(\mathcal{D})$
and $U=L^2(\Gamma_N)$, $\Gamma_N$ being a subset of the boundary of $\mathcal{D}$.

In this work, we aim at finding a deterministic control~$u \in U_{\mathrm{ad}}$
that steers a random state~$y$ as close as possible to a desired target~$g \in H$
while minimizing a given cost functional~$J$ involving a risk measure~$\mathcal{R}$.
Motivated by series expansions of random fields,
we assume the uncertainty of the problem is modeled through
a countable sequence~$\bsxi = (\xi_j)_{j\in \mathbb{N}}$ of independent
and identically distributed uniform random variables~$\xi_j \sim \mathcal{U}([-1,1])$,
i.e.,~$\bsxi \in [-1,1]^{\mathbb{N}} \eqqcolon \boxN$,
defined on a complete probability space $(\Omega,\mathcal{F},\PP)$.
We shall be interested in continuous functions~$\bsxi \mapsto Z(\bsxi)$ taking
values in some separable Hilbert space.
In this case~$Z$ is~$\mathrm d\bsxi$-Bochner-integrable,
and we can write expected values as
\begin{align*}
    \mathbb{E}[Z] \coloneqq \int_{\boxN} Z(\bsxi) \,\mathrm d\bsxi,
    \quad \text{where} \quad
    \mathrm{d}\bsxi \coloneqq \bigotimes\limits_{j\in \mathbb{N}} \frac{\mathrm d\xi_j}{2}.
\end{align*}
Our model problem is the minimization of the objective functional
\begin{align}
    \label{eq:cost}
    J(u,y) \coloneqq \mathcal{R}(\Phi(y))
    + \frac{\lambda}{2} \|u\|_U^2,
    \quad  \text{where} \quad
    \Phi(y) \coloneqq \frac{1}{2}\|y-g\|_H^2,
\end{align}
for~$\lambda>0$, subject to
\begin{align}
    \label{eq:sys}
    &a(\bsxi;y,v)= \langle Bu,v \rangle_{V',V},
    \qquad \forall v\in V,
    \text{ and for a.e. }
    \bsxi \in \boxN, \\
    &u \in U_{\mathrm{ad}}.
    \label{eq:boxconstr}
\end{align}
We assume that~$\{a(\bsxi;\cdot,\cdot)\mid \bsxi \in \boxN\}$ forms a
family of parameter-dependent,
continuous and uniformly coercive bilinear forms,
i.e., there exists a $a_{\min} >0$ such that $a(\bsxi; v,v) \ge a_{\min} \|v\|^2_V$
for all~$\bsxi \in \boxN$.
Thus, with the family of bilinear forms we can associate a family of
operators~$\{A(\bsxi) \in \mathcal{L}(V,V') \mid \bsxi \in \boxN\}$,
which is a family of isomorphisms with uniformly
bounded inverses~$\|A(\bsxi)^{-1}\|_{\mathcal{L}(V',V)} \le \frac{1}{a_{\min}}$
for all~$\bsxi \in \boxN$.
Hence, given a control operator~$B \in \mathcal{L}(U,V')$ and~$u \in U$,
there exists a unique solution~$y_u(\bsxi) \in V$ of~\eqref{eq:sys}
for a.e.~$\bsxi \in \boxN$.
This permits to define the family
of parametric control-to-state operators
$\{S[\bsxi]\in \mathcal{L}(U,V)\mid \bsxi\in \boxN\}$ such that
\begin{align*}
    S[\bsxi](u) \coloneqq A(\bsxi)^{-1}B u,
\end{align*}
for~$u\in U_{\mathrm{ad}}$ and~$\bsxi \in \boxN$.
Under these assumptions the mapping~$\bsxi \mapsto y(\bsxi) \in V$
is continuous if~$\bsxi \mapsto A(\bsxi) \in \mathcal{L}(V,V')$ is continuous.
Consequently, for every $u\in U_{\mathrm{ad}}$, the misfit $\Phi(S[\cdot](u))$
is an essentially bounded random variable
mapped to the real line by the risk measure $\mathcal{R}$.

Using the control-to-state operator,
we can formulate the reduced problem as
\begin{align}
    \label{eq:J_red}
    \min_{u \in U_{\mathrm{ad}}} \mathcal{J}(u),
    \quad \text{where} \quad
    \mathcal{J}(u) = J(u,S[\cdot](u)).
\end{align}

Provided that $\mathcal{R}$ is a proper, closed,
convex and monotonic risk-measure, the existence
of a solution to~$\min_{u \in U_{\mathrm{ad}}} \mathcal{J}(u)$
(and thus~\eqref{eq:cost} subject to~\eqref{eq:sys}--\eqref{eq:boxconstr})
follows from~\cite[Prop.~3.12]{KouriSurowiec1}.
Uniqueness follows from the strong convexity of~$\mathcal{J}$
in~\eqref{eq:J_red} due the penalization term.
For sufficiently regular~$\mathcal{J}$,
since $U_{\mathrm{ad}}$ is a nonempty and convex set
(see, e.g.~\cite[Lemma 2.21]{frediTr}),
the optimal control~$u^\star$ satisfies the variational inequality
\begin{equation}
    \label{eq:variational_inequality}
    \langle \nabla \mathcal{J}(u^\star), u - u^\star \rangle_{U} \ge 0 \quad \forall u \in U_{\mathrm{ad}},
\end{equation}
where~$\nabla \mathcal{J}$ denotes the gradient of~$\mathcal{J}$,
i.e., the Riesz representer in~$U$ of the Fr\'echet
derivative of~$\mathcal{J}$.
Moreover, for convex objective functionals,
like~$\mathcal{J}$ in~\eqref{eq:J_red},
the variational inequality is a necessary and sufficient condition for optimality.
Since $U_{\mathrm{ad}}$ is closed and convex,
\eqref{eq:variational_inequality} can be equivalently formulated
(\cite[Lemma 1.11]{hinze2008optimization}) for any $\tau>0$ as
\begin{align*}
    u^\star-\Pi_{U_\mathrm{ad}}(u^\star-\tau \nabla \mathcal{J}(u^\star))=0,
\end{align*}
where $\Pi_{U_\mathrm{ad}}:U\rightarrow U_{\mathrm{ad}}$
is the projector onto the admissible set of controls.

    \subsection{Entropic risk measure}
\label{subsec:entropic-risk-measure}

In this work, we focus on the \textit{entropic risk measure}
with risk-aversion parameter~$\theta>0$ which,
for a random variable~$Z \in L^\infty(\boxN;\mathbb{R})$, is defined as
\begin{equation}
    \label{eq:entropic-risk}
    \mathcal{R}_\theta (Z) \coloneqq \tfrac{1}{\theta} \log{\left( \mathbb{E} \left[ \mathrm \exp(\theta Z)\right]\right)}.
\end{equation}
As a function of~$\theta$, the entropic risk measure is
increasing and strictly increasing if~$Z$ is not constant (a.s.).
Moreover, it can be shown (cf.~\cite[Eqn.~29]{FoellmerKnispel}) that
\begin{align*}
    \lim_{\theta \to 0} \mathcal{R}_\theta(Z) = \mathbb{E}\left[Z\right]
    \qquad \text{and} \qquad
    \lim_{\theta \to \infty} \mathcal{R}_\theta(Z) = {\mathrm{ess}\,\sup}(Z),
\end{align*}
hence, the scalar parameter $\theta$ controls the level of risk-aversion.
A direct calculation shows that the gradient of $\mathcal{J}$,
when $\mathcal{R}$ coincides with the entropic risk measure, is
\begin{equation}
    \label{eq:exact-gradient}
    \nabla \mathcal{J}(u) = \frac{\mathbb{E} [\exp(\theta \Phi(y)) \, B^\ast q]}{\mathbb{E} [\exp(\theta \Phi(y))]} + \lambda u
    = \frac{\mathbb{E} [X]}{\mathbb{E} [Y]} + \lambda u
\end{equation}
where $y$ and $q$ are the solutions of
\begin{align*}
    \langle A(\bsxi) y(\bsxi), v\rangle_{V',V} &= \langle Bu, v\rangle_{V',V} && \forall v\in V,
    \quad  \text{a.e. } \bsxi \in \boxN,\\
    \langle A^\ast(\bsxi) q(\bsxi), v\rangle_{V',V} &= \langle y(\bsxi) - g, v\rangle_{H} &&\forall v\in V,
    \quad \text{a.e. } \bsxi \in \boxN
\end{align*}
and
\begin{equation}\label{eq:definition_X_Y}
    X \coloneqq \exp(\theta \Phi(y))B^\ast q,\qquad Y \coloneqq \exp(\theta \Phi(y))
\end{equation}
are, respectively, $U$-valued and real-valued random variables introduced to ease notation.

    \subsection{Optimization method}
\label{subsec:optimization-method}

To compute a numerical approximation of $u^\star$,
we propose and analyze an adaptive sampling stochastic
algorithm based on a MLMC gradient estimation.
Specifically, we consider a sequence of finite element (FE)
subspaces $\{V_{\ell}\}_{\ell \in\mathbb{N}} \subset V$ of increasing dimension,
and the FE solution operators $S_{\ell}[\bsxi]:U\rightarrow V_{\ell}$ so that
\begin{align*}
    a(\bsxi;S_{\ell}[\bsxi](u),v_\ell)=\langle B u, v_{\ell}\rangle_{V',V},
    \quad\forall v_{\ell}\in V_{\ell}, \text{ and for a.e. } \bsxi \in \boxN.
\end{align*}
For every $\ell\in \mathbb{N}$, we define $y_\ell \coloneqq S_{\ell}[\cdot](u)$ and $q_{\ell}$ as the solution of
\begin{align*}
    a(\bsxi;v_\ell,q_{\ell})=\langle S_\ell[\bsxi](u)-g, v_{\ell}\rangle_{H},
    \quad \forall v_{\ell}\in V_{\ell}, \text{ and for a.e. } \bsxi \in \boxN.
\end{align*}
Note that we omit the explicit dependence of both
$y_\ell$ and $q_{\ell}$ on the control variable $u$,
as this will be clear from the context.

Next, for a given $L \in \mathbb{N}$ and a set of sample sizes $\set{M_\ell}_{\ell=1}^L$,
we introduce the gradient estimator
\begin{equation}
    \label{eq:gradient_estimator}
    \nabla \mathcal{J}^{\mathrm{ML}}(u)
    \coloneqq
    \frac{\EE^{\mathrm{ML}}\big[\exp(\theta \Phi(y_L)) \, B^\ast q_L \big]}
    {\EE_{\geq 1}^{\mathrm{ML}}\big[\exp(\theta \Phi(y_L))\big]} + \lambda u
    = \frac{\EE^{\mathrm{ML}}\big[X_L\big]}{\EE_{\geq 1}^{\mathrm{ML}}\big[Y_L\big]} + \lambda u,
\end{equation}
where, for every $\ell\in \mathbb{N}$,
\begin{equation}
    \label{eq:definition_Xell_Yell}
    X_\ell \coloneqq \exp(\theta \Phi(y_\ell))B^\ast q_\ell,\quad  Y_\ell \coloneqq \exp(\theta \Phi(y_\ell)).
\end{equation}
The MLMC estimators appearing in~\eqref{eq:gradient_estimator} are given by
\begin{align}
    \label{eq:ml_estimator_numerator}
    \EE^{\mathrm{ML}}\big[X_L\big]
    &=\sum_{\ell=1}^L \frac{1}{M_{\ell}}\sum_{m=1}^{M_\ell} X^{(m)}_{\ell}-X^{(m)}_{\ell-1}
    = \sum_{\ell=1}^L \frac{1}{M_{\ell}}\sum_{m=1}^{M_\ell} \Delta X^{(m)}_{\ell}, \\
    \label{eq:ml_estimator_denominator}
    \EE^{\mathrm{ML}}\big[Y_L\big]
    &=\sum_{\ell=1}^L \frac{1}{M_{\ell}}\sum_{m=1}^{M_\ell} Y^{(m)}_{\ell}-Y^{(m)}_{\ell-1}
    = \sum_{\ell=1}^L \frac{1}{M_{\ell}}\sum_{m=1}^{M_\ell} \Delta Y^{(m)}_{\ell},
\end{align}
with $\EE_{\geq 1}^{\textrm{ML}}\left[Y_L\right] \coloneqq \max\{1,\EE^{\mathrm{ML}}\big[Y_L\big]\}$.
Indeed, although $\EE[Y]\geq 1$, the MLMC estimator $\EE^{\mathrm{ML}}\left[Y_L\right]$ is a random variable,
and has a nonzero probability of attaining values below one,
potentially even arbitrarily close to zero due to the difference of hierarchical contributions.
The truncation guarantees that the denominator in~\eqref{eq:gradient_estimator}
remains uniformly bounded away from zero.
While none of our numerical experiments produced an instance of $\EE^{\mathrm{ML}}[Y_L] < 1$,
we include the maximum operator to perform a rigorous theoretical analysis of the algorithm.

In \eqref{eq:ml_estimator_numerator} and \eqref{eq:ml_estimator_denominator},
we have used the shorthand notation
\begin{equation}
    \label{eq:shorthand-notation}
    \begin{aligned}
        X^{(m)}_{\ell}&\coloneqq\exp(\theta \Phi[\bsxi^{(m,\ell)}](S_\ell[\bsxi^{(m,\ell)}](u))) B^\ast q_\ell^{(m,\ell)},
        \,\,\,\,
        Y^{(m)}_{\ell-1}\coloneqq\exp(\theta \Phi[\bsxi^{(m,\ell)}](S_{\ell-1}[\bsxi^{(m,\ell)}](u))),
        \\
        X^{(m)}_{\ell-1}&\coloneqq\exp(\theta \Phi[\bsxi^{(m,\ell)}](S_{\ell-1}[\bsxi^{(m,\ell)}](u))) B^\ast q_{\ell-1}^{(m,\ell)},
        \,\,\,\,
        Y^{(m)}_{\ell}\coloneqq\exp(\theta \Phi[\bsxi^{(m,\ell)}](S_{\ell}[\bsxi^{(m,\ell)}](u))),
    \end{aligned}
\end{equation}
to emphasize that all quantities in~\eqref{eq:shorthand-notation}
depend on the random realization $\bsxi^{(m,\ell)}$, and set
\begin{equation}
    \label{eq:delta-notation}
    \Delta X^{(m)}_{\ell} \coloneqq X^{(m)}_{\ell}-X^{(m)}_{\ell-1}
    \quad \text{and} \quad
    \Delta Y^{(m)}_{\ell} \coloneqq Y^{(m)}_{\ell}-Y^{(m)}_{\ell-1}
\end{equation}
as well as used the convention that $X^{(m)}_0=0$ and $Y^{(m)}_0=0$.
Furthermore, for $\ell \in \set{1,\dots,L}$, the elements of $\{\bsxi^{(m,\ell)}\}_{m=1}^{M_\ell}$
are independent and identically distributed, and these sequences
are mutually independent and identically distributed across $\ell$.
For every $\ell=1,\dots,L$, $q_{\ell}^{(m,\ell)} \in V_\ell$ and $q_{\ell-1}^{(m,\ell)} \in V_{\ell-1}$
are the discrete adjoint variables, solutions of
\begin{align*}
    a(\bsxi^{(m, \ell)};v_\ell,q_{\ell}^{(m,\ell)})&=\langle S_\ell[\bsxi^{(m,\ell)}](u)-g, v_{\ell}\rangle_{H},
    \quad \forall v_{\ell}\in V_{\ell}, \;m=1,\dots,M_{\ell},\\
    a(\bsxi^{(m, \ell)};v_{\ell-1},q_{\ell-1}^{(m,\ell)})&=\langle S_{\ell-1}[\bsxi^{(m,\ell)}](u)-g, v_{\ell-1}\rangle_{H},
    \quad \forall v_{\ell-1}\in V_{\ell-1}, \;m=1,\dots,M_{\ell}.
\end{align*}
Hence, in \eqref{eq:shorthand-notation} the superscript $(m,\ell)$
refers to a specific realization of the random variable $\bsxi$,
while the subscript $\ell$ specifies the level of mesh refinement
used in the discretization of the state and adjoint equations.

Our optimization algorithm starts from an initial guess $u_0$ and consists in the iteration
\begin{equation}
    \label{eq:gradientdescentMLMC}
    u_{k+1}=\Pi_{U_{\text{ad}}}\left(u_{k}-\tau \nabla \mathcal{J}^{\mathrm{ML}}(u_k)\right).
\end{equation}
In the following, we aim at both determining a set of sufficient conditions such that
the sequence $\{u_k\}_{k\in\mathbb{N}}$ converges to $u^\star$,
and at testing the algorithm's efficiency in a massive parallel environment.
    \section{Convergence analysis}
\label{sec:convergence-analysis}
To achieve our first goal of establishing
sufficient convergence conditions,
we start by analyzing an adaptive stochastic gradient descent
algorithm with an MLMC gradient estimator.
For this analysis, building on the results of~\cite{beiser2023adaptive,ganesh2023gradient},
we assume that:
\begin{enumerate}
    \item[a)] $\mathcal{J}:U\rightarrow \mathbb{R}$ is continuously Fr\'echet differentiable.
    \item[b)] Lipschitz gradients: $\exists\;L\in\mathbb{R}^+$ such that for every $u_1,u_2\in U_{\mathrm{ad}}$,
    \begin{equation}
        \label{eq:lipschitz_gradients}
        \|\nabla \mathcal{J} (u_1)-\nabla \mathcal{J} (u_2)\|_U\leq L\|u_1-u_2\|_U.
    \end{equation}
    \item[c)] Strong convexity: $\exists\;c\in\mathbb{R}^+$ such that for every $u_1,u_2\in U_{\mathrm{ad}}$,
    \begin{equation}
        \label{eq:strongcoercivity}
        \langle\nabla \mathcal{J}(u_1)-\nabla \mathcal{J}(u_2),u_1-u_2\rangle_U \geq c \|u_1-u_2\|_U^2.
    \end{equation}
\end{enumerate}

The model problem~\eqref{eq:cost} s.t.~\eqref{eq:sys}--\eqref{eq:boxconstr} satisfies these assumptions.
To verify this, we first note that the map $u\mapsto \mathcal{J}(u)$ is a composition of smooth maps and, in particular, Fr\'echet differentiable with a continuous gradient. In addition, since $U_{\mathrm{ad}}$ is bounded and the family
$\set{A(\bsxi)}_{\bsxi \in \boxN}$ has uniformly bounded inverses,
both $\Phi(S[\bsxi](u))$ and $\exp(\theta \Phi(S[\bsxi](u))$ are bounded on
$U_{\mathrm{ad}}$ for all $\bsxi \in \boxN$.
Moreover, the boundedness of $\Phi(S[\bsxi](u))$ implies Lipschitz
continuity of $\exp(\theta \Phi(S[\bsxi](u)))$ on $U_{\mathrm{ad}}$
with a constant independent of~$\bsxi$.
Then, a direct calculation shows that the gradient \eqref{eq:exact-gradient} is Lipschitz.
Finally, the strong convexity of $\mathcal{J}$ follows from the convexity of $u\mapsto\Phi(S[\cdot](u)))$,
the convexity and monotonicity of~$\mathcal{R}$ and the strong convexity of $\|u\|_U^2$.

Next, we define the reduced projected gradient
\begin{align*}
    \mathcal{R}_{\tau}(u) \coloneqq \frac{1}{\tau}\left(u-\Pi_{U_{\text{ad}}}\left(u-\tau\nabla \mathcal{J}(u)\right)\right),
\end{align*}
and recall that under the previous assumptions,
if $\tau \in (0,\frac{1}{L})$ it holds that
(see, e.g.~\cite[Corollary 2.3.2]{nesterov2018lectures} and~\cite[Eqn.~(2.34)]{beiser2023adaptive})
\begin{equation}
    \label{eq:boundreducedgradient}
    \|\mathcal{R}_{\tau}(u)\|_U\leq \frac{1}{\tau}\left(1+\sqrt{1-c\tau}\right)\|u-u^\star\|_U,
\end{equation}
for every $u\in U_{\text{ad}}$.
Furthermore, we denote by $\mathcal{F}_k$ the sigma-algebra generated by
all random variables drawn up to iteration $k-1$ included,
and denote the conditional expectation on $\mathcal{F}_k$ by $\EE_k$.

\begin{theorem}\label{thm:error}
    Consider the iterates generated by \eqref{eq:gradientdescentMLMC}
    and assume that at each step the maximum level of refinement $L$
    and the associated MLMC hierarchy $\left\{M_{\ell}\right\}_{\ell=1}^L$ are chosen so that
    \begin{equation}
        \label{eq:conditionmeansquareerror}
        \EE_k\left[\|\nabla \mathcal{J}(u_k)-\nabla \mathcal{J}^{\mathrm{ML}}(u_k)\|_U^2 \right]\leq \eta \|\mathcal{R}_{\tau}(u_k)\|_U^2,
    \end{equation}
    for $\eta\in(0,\infty)$.
    Then, for every $\tau\in (0,\frac{c}{2L^2})$ and $\eta$ sufficiently
    small so that $4\sqrt{\eta}+8\eta<\frac{c^2}{2L^2}$, it holds
    \begin{align}\label{eq:meansquareaccuracy}
        \EE\left[\|u^\star-u_k\|_U^2\right]\leq \rho^k\|u^\star-u^0\|_U^2,
    \end{align}
    for some $\rho\in (0,1)$.
\end{theorem}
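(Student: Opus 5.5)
We argue one step at a time and then iterate, following the strategy of \cite{beiser2023adaptive}. Write $e_k \coloneqq \nabla \mathcal{J}^{\mathrm{ML}}(u_k)-\nabla \mathcal{J}(u_k)$ for the gradient error. Since $u^\star$ solves the variational inequality~\eqref{eq:variational_inequality}, it is a fixed point: $u^\star=\Pi_{U_{\mathrm{ad}}}(u^\star-\tau\nabla\mathcal{J}(u^\star))$. The projection onto the closed convex set $U_{\mathrm{ad}}$ is nonexpansive, so
\begin{align*}
\|u_{k+1}-u^\star\|_U \le \|u_k-u^\star-\tau(\nabla\mathcal{J}(u_k)-\nabla\mathcal{J}(u^\star))\|_U + \tau\|e_k\|_U .
\end{align*}
We expand the square of the first term using strong convexity~\eqref{eq:strongcoercivity} and the Lipschitz bound~\eqref{eq:lipschitz_gradients}. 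This gives $(1-2c\tau+L^2\tau^2)\|u_k-u^\star\|_U^2$. For $\tau<\frac{c}{2L^2}$ this factor is at most $1-\frac{3}{2}c\tau$, and in particular it is strictly less than one. Denote by $q(\tau)^2$ this contraction factor.

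Next we square the whole inequality. The cross term is handled with Young's inequality with a free parameter $\epsilon>0$:
\begin{align*}
\|u_{k+1}-u^\star\|_U^2 \le (1+\epsilon)q(\tau)^2\|u_k-u^\star\|_U^2 + (1+\epsilon^{-1})\tau^2\|e_k\|_U^2 .
\end{align*}
Then we take the conditional expectation $\EE_k$. Since $u_k$ is $\mathcal{F}_k$-measurable, only the error term is random. By~\eqref{eq:conditionmeansquareerror} and~\eqref{eq:boundreducedgradient},
\begin{align*}
\EE_k\|e_k\|_U^2\le \eta\|\mathcal{R}_\tau(u_k)\|_U^2\le \frac{\eta}{\tau^2}\bigl(1+\sqrt{1-c\tau}\bigr)^2\|u_k-u^\star\|_U^2\le \frac{4\eta}{\tau^2}\|u_k-u^\star\|_U^2 .
\end{align*}
The factor $\tau^2$ cancels. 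We obtain $\EE_k\|u_{k+1}-u^\star\|_U^2\le \rho\|u_k-u^\star\|_U^2$ with
\begin{align*}
\rho=(1+\epsilon)q(\tau)^2+4(1+\epsilon^{-1})\eta .
\end{align*}

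The main obstacle is choosing $\epsilon$ so that $\rho<1$ under exactly the hypothesis $4\sqrt\eta+8\eta<\frac{c^2}{2L^2}$. The natural choice is $\epsilon=\sqrt\eta$ or $\epsilon=2\sqrt\eta$, which balances the two $\epsilon$-dependent contributions. This produces terms of order $\sqrt\eta$ and $\eta$, matching the form $4\sqrt\eta+8\eta$ in the hypothesis.

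The requirement is then
\begin{align*}
\epsilon\, q^2 + 4\eta + 4\eta/\epsilon < 1-q^2 .
\end{align*}
Here $1-q^2=2c\tau-L^2\tau^2$. Evaluated at the extreme choice $\tau\approx c/(2L^2)$, this is of size $\frac{3c^2}{4L^2}$, which is comfortably above $\frac{c^2}{2L^2}$. I would therefore fix $\tau$ and verify the inequality directly, possibly replacing the constant $4$ from $(1+\sqrt{1-c\tau})^2$ by a sharper bound if the stated constants require it.

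If the stated condition does not close exactly in this form, I would instead bound $\sqrt{1-c\tau}\le 1$ differently, or absorb the cross term via Cauchy--Schwarz in expectation: $\EE_k\|e_k\|_U\le\sqrt{\EE_k\|e_k\|_U^2}$. This avoids Young's inequality altogether and yields
\begin{align*}
\rho=q^2+2q\cdot 2\sqrt\eta+4\eta ,
\end{align*}
which again requires $4\sqrt\eta+8\eta$ to be less than a multiple of $c^2/L^2$.

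Finally, we take full expectations using the tower property, which gives $\EE\|u_{k+1}-u^\star\|_U^2\le\rho\,\EE\|u_k-u^\star\|_U^2$. Induction on $k$ then yields~\eqref{eq:meansquareaccuracy}.
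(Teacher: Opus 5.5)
Your one-step reduction is correct, and it is slightly sharper than the paper's. The paper expands $\|u^\star-u_k-\tau(\nabla\mathcal{J}(u^\star)-\nabla\mathcal{J}^{\mathrm{ML}}(u_k))\|_U^2$ directly and splits the quadratic term with $\|a+b\|^2\le 2\|a\|^2+2\|b\|^2$, ending with $\rho\le 1-2\tau c+2\tau^2L^2+4\sqrt\eta+8\eta$. You instead keep the exact contraction $q(\tau)^2=1-2c\tau+L^2\tau^2$.

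The gap is the step you yourself flag as the main obstacle. You never show $\rho<1$, and it cannot be shown for \emph{every} $\tau\in(0,\frac{c}{2L^2})$ under the $\tau$-independent hypothesis $4\sqrt\eta+8\eta<\frac{c^2}{2L^2}$. Optimizing your Young parameter at $\epsilon=2\sqrt\eta/q$ gives exactly your Cauchy--Schwarz value $\rho=(q(\tau)+2\sqrt\eta)^2$. So both routes require $2\sqrt\eta<1-q(\tau)=\frac{2c\tau-L^2\tau^2}{1+q(\tau)}$, and this right-hand side tends to $0$ (like $c\tau$) as $\tau\to0$. The error contribution does not shrink with $\tau$, precisely because of the cancellation of $\tau^2$ against the $\tau^{-2}$ in \eqref{eq:boundreducedgradient} that you noted. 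Checking the inequality at $\tau\approx\frac{c}{2L^2}$ only covers a neighbourhood of the right endpoint. Your closing remark that the condition is ``less than a multiple of $c^2/L^2$'' is also not correct: the threshold depends on $\tau$.

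A sharper constant would not fix this, and the paper's last step has the same gap. Since $2\tau(c-\tau L^2)$ increases on $(0,\frac{c}{2L^2})$ up to $\frac{c^2}{2L^2}$, the stated hypothesis is only necessary for the paper's bound to give $\rho<1$, not sufficient. Using only (a)--(c) and \eqref{eq:conditionmeansquareerror}, the claim is in fact false for small $\tau$. Take $U_{\mathrm{ad}}=[0,1]\times[-1,1]\subset\mathbb{R}^2$ and $\mathcal{J}(u)=\frac12((u_1+1)^2+u_2^2)$, so $c=L=1$ and $u^\star=0$. Take $\eta=10^{-2}$, so $4\sqrt\eta+8\eta=0.48<\frac12$, and $u_0=(s,s/\sqrt\eta)$ with $0<s<\tau$ small. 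Then $\mathcal{R}_\tau(u_0)=(s/\tau,s/\sqrt\eta)$. The deterministic error $e_0=(0,-\sqrt\eta\|\mathcal{R}_\tau(u_0)\|_U)$ meets \eqref{eq:conditionmeansquareerror} with equality, yet gives $u_1=(0,(1-\tau)s/\sqrt\eta+\sqrt\eta\sqrt{s^2+\tau^2s^2/\eta})$. Hence $\|u_1-u^\star\|_U^2/\|u_0-u^\star\|_U^2\ge(1-\tau+\eta)^2/(1+\eta)>1$ whenever $\tau<1+\eta-\sqrt{1+\eta}$, so no $\rho<1$ works already at $k=1$.

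What your argument does prove, completely, is the $\tau$-dependent version: linear convergence with $\rho=(q(\tau)+2\sqrt\eta)^2<1$ whenever $2\sqrt\eta<1-q(\tau)$. The paper's cruder analogue is $4\sqrt\eta+8\eta<2\tau(c-\tau L^2)$. The stated hypothesis guarantees your condition only for $\tau$ sufficiently close to $\frac{c}{2L^2}$.
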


\begin{proof}
    Observe that
    \begin{equation*}
        \begin{aligned}
            \|u^\star-u_{k+1}\|_U^2 &=\|\Pi_{U_{\text{ad}}}\left(u^\star-\tau\nabla\mathcal{J}(u^\star)\right) - \Pi_{U_{\text{ad}}}\left(u_k-\tau\nabla\mathcal{J}^{\textrm{ML}}(u_k)\right)\|^2_U\\
            &\leq \|u^\star-\tau\nabla\mathcal{J}(u^\star)-u_k+\tau\nabla\mathcal{J}^{\textrm{ML}}(u_k)\|_U^2\\
            &\leq \|u^\star-u_k\|_U^2 -2\tau\langle u^\star-u_k,\nabla \mathcal{J}(u^\star)-\nabla \mathcal{J}^{\textrm{ML}}(u_k)\rangle_U+\tau^2\|\nabla \mathcal{J}(u^\star)-\nabla \mathcal{J}^{\textrm{ML}}(u_k)\|_U^2.
        \end{aligned}
    \end{equation*}
    Taking the conditional expectation $\EE_k$ yields
    \begin{equation*}
        \begin{aligned}
            \EE_k\left[\|u^\star-u_{k+1}\|_U^2\right] &=\|u^\star-u_k\|_U^2\\
            &-2\tau\langle u^\star-u_k,\EE_k\left[\nabla \mathcal{J}(u^\star)-\nabla \mathcal{J}^{\textrm{ML}}(u_k)\right]\rangle_U\\
            &+\tau^2 \EE_k\left[\|\nabla \mathcal{J}(u^\star)-\nabla \mathcal{J}^{\textrm{ML}}(u_k)\|_U^2\right],
        \end{aligned}
    \end{equation*}
    where we used that $\EE_k\left[\|u^\star-u_k\|_U^2\right]=\|u^\star-u_k\|_U^2$ since $u_k$ is fully determined by $\mathcal{F}_k$.
    Concerning the second term, we get with~\eqref{eq:strongcoercivity} that
    \begin{equation*}
        \begin{aligned}
            \langle u^\star-u_k,\EE_k\left[\nabla \mathcal{J}(u^\star)-\nabla \mathcal{J}^{\textrm{ML}}(u_k)\right]\rangle_U&=
            \langle u^\star-u_k,\EE_k\left[\nabla \mathcal{J}(u^\star)-\nabla \mathcal{J}(u_k)\right]\rangle_U\\
            &+\langle u^\star-u_k,\EE_k\left[\nabla \mathcal{J}(u_k)-\nabla \mathcal{J}^{\textrm{ML}}(u_k)\right]\rangle_U\\
            &\geq c\|u^\star-u_k\|_U^2-\|u-u_k\|_U\|\EE_k\left[\nabla \mathcal{J}(u_k)-\nabla \mathcal{J}^{\textrm{ML}}(u_k)\right]\|_U.
        \end{aligned}
    \end{equation*}
    For the third term, we obtain with~\eqref{eq:lipschitz_gradients}
    \begin{equation*}
        \begin{aligned}
            \EE_k\left[\|\nabla \mathcal{J}(u^\star)-\nabla \mathcal{J}^{\textrm{ML}}(u_k)\|_U^2\right]&=
            \EE_k\left[\|\nabla \mathcal{J}(u^\star)-\nabla \mathcal{J}(u_k)+\nabla \mathcal{J}(u_k)-\nabla \mathcal{J}^{\textrm{ML}}(u_k)\|_U^2\right]\\
            & \leq 2 \|\nabla \mathcal{J}(u^\star)-\nabla\mathcal{J}(u_k)\|_U^2
            +2\EE_k\left[\|\nabla \mathcal{J}(u_k)-\nabla\mathcal J^{\mathrm{ML}}(u_k)\|_U^2\right]\\
            & \leq 2 L^2 \|u^\star-u_k\|_U^2 + 2\eta \|\mathcal{R}_{\tau}(u_k)\|_U^2.
        \end{aligned}
    \end{equation*}
    Observe now that
    \begin{align*}
        \|\EE_k\left[\nabla \mathcal{J}(u_k)-\mathcal{J}^{\mathrm{ML}}(u_k) \right]\|_U\leq \left( \EE_k\left[\|\nabla \mathcal{J}(u_k)-\nabla\mathcal{J} ^{\mathrm{ML}}(u_k)\|_U^2\right]\right)^{\frac{1}{2}}\leq \sqrt{\eta}\|\mathcal{R}_{\tau}(u_k)\|_U,
    \end{align*}
    so that by collecting all pieces we derive the estimate
    \begin{align*}
        \EE_k\left[ \|u^\star-u_{k+1}\|_U^2\right]\leq \left(1-2\tau c +2\sqrt{\eta}(1+\sqrt{1-c\tau}) + 2\tau^2L^2 +2\eta\left(1+\sqrt{1-c\tau}\right)^2\right) \|u^\star-u_{k}\|_U^2.
    \end{align*}
    Note that
    \begin{align*}
        \rho \coloneqq 1-2\tau c + 2\sqrt{\eta}(1+\sqrt{1-c\tau}) + 2\tau^2 L^2 +2\eta\left(1+\sqrt{1-c\tau}\right)^2\leq 1-2\tau c+ 2\tau^2L^2 + 4\sqrt{\eta}+8\eta,
    \end{align*}
    and, for every $\tau\in (0,\frac{c}{2L^2})$ and $\eta$ sufficiently small
    so that $4\sqrt{n}+8\eta<\frac{c^2}{2L^2}$, it holds that $\rho<1$.
    Finally, by using the tower property the claim follows.
\end{proof}

Condition \eqref{eq:conditionmeansquareerror} consists in
a bound on the mean square error (MSE) of the MLMC gradient estimation.
The next proposition bounds this term by the MSEs
of the two auxiliary random variables $X$ and $Y$ defined in \eqref{eq:definition_X_Y}.
Recall that, since $U_{\mathrm{ad}}$ is bounded and the family
$\set{A(\bsxi)}_{\bsxi\in\boxN}$ has uniformly bounded inverses,
both $y$ and $q$ are in $L^\infty(\boxN;V)$,
and consequently also $y_{\ell}$ and $q_{\ell}$
due to the stability of the FE discretization.

\begin{proposition}\label{proposition:bound_mean_square_error}
    For every $u\in U_{\mathrm{ad}}$, the MSE of
    the MLMC gradient estimator satisfies
    \begin{equation}
        \label{eq:mse-gradient-estimate-1}
        \begin{aligned}
            \EE_k \big[ \| \nabla \mathcal{J}(u)-\nabla \mathcal{J}^{\mathrm{ML}}(u)\|_U^2  \big]
            & \leq 2 \Bigl(C \; \EE_k \bigl[\left|\EE\left[Y\right]-\EE^{\mathrm{ML}}\left[Y_L\right]\right|^2 \bigr] \\
            &\hspace{-0.7cm}
            + C D_L \, \EE_k \bigl[\left|\EE[Y_L]-\EE^{\mathrm{ML}}[Y_L]\right|^2 \bigr]
            + \EE_k \bigl[ \left\| \EE\left[X\right]-\EE^{\mathrm{ML}}\left[X_L\right]\right\|_U^2 \bigr]\Bigr),
        \end{aligned}
    \end{equation}
    where $C \coloneqq \frac{\|\mathbb{E}[X]\|_U^2}{\EE\left[Y\right]^2}$
    and $D_L \coloneqq \frac{(\EE[Y]-1)^2}{(\EE[Y_L]-1)^2}$.
\end{proposition}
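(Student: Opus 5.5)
The plan is to decompose the error of the quotient estimator into a numerator part and a denominator part, and to treat the truncation $\max\{1,\cdot\}$ by a case distinction. Fix $u\in U_{\mathrm{ad}}$ and abbreviate $A\coloneqq\EE^{\mathrm{ML}}[X_L]$ and $B\coloneqq\EE^{\mathrm{ML}}_{\geq 1}[Y_L]=\max\{1,\EE^{\mathrm{ML}}[Y_L]\}$. The terms $\lambda u$ in \eqref{eq:exact-gradient} and \eqref{eq:gradient_estimator} cancel, so
\begin{align*}
\nabla\mathcal{J}(u)-\nabla\mathcal{J}^{\mathrm{ML}}(u)
=\Bigl(\frac{\EE[X]}{\EE[Y]}-\frac{\EE[X]}{B}\Bigr)+\frac{\EE[X]-A}{B}.
\end{align*}
By $(a+b)^2\le 2a^2+2b^2$ this produces the overall factor $2$ in \eqref{eq:mse-gradient-estimate-1}. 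Since $B\ge 1$, the second summand has norm at most $\|\EE[X]-A\|_U$, which gives the $X$-term after taking $\EE_k$. For the first summand I would use $\EE[Y]\ge 1$ and $B\ge1$ to get
\begin{align*}
\Bigl\|\frac{\EE[X]\,(B-\EE[Y])}{\EE[Y]\,B}\Bigr\|_U\le \frac{\|\EE[X]\|_U}{\EE[Y]}\,|B-\EE[Y]|,
\end{align*}
so its square is at most $C\,|B-\EE[Y]|^2$.

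It then remains to show the pointwise (in the samples) bound
\begin{align*}
|B-\EE[Y]|^2\le |\EE[Y]-\EE^{\mathrm{ML}}[Y_L]|^2+D_L\,|\EE[Y_L]-\EE^{\mathrm{ML}}[Y_L]|^2 .
\end{align*}
Taking $\EE_k$ of this inequality yields the claim. I expect this step, the treatment of the truncation, to be the only nonroutine point, and I would handle it by two cases.

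If $\EE^{\mathrm{ML}}[Y_L]\ge 1$, then $B=\EE^{\mathrm{ML}}[Y_L]$, and the first term on the right already suffices because the second term is nonnegative.

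If $\EE^{\mathrm{ML}}[Y_L]<1$, then $B=1$ and $|B-\EE[Y]|^2=(\EE[Y]-1)^2=D_L(\EE[Y_L]-1)^2$. Since $\Phi\ge0$, we have $Y_L=\exp(\theta\Phi(y_L))\ge1$ and hence $\EE[Y_L]\ge1$. Together with $\EE^{\mathrm{ML}}[Y_L]<1$ this gives
\begin{align*}
|\EE[Y_L]-\EE^{\mathrm{ML}}[Y_L]|\ge \EE[Y_L]-1\ge0,
\end{align*}
so the second term dominates. In both cases the omitted term is nonnegative, so the bound holds.

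For $D_L$ to be well defined we need $\EE[Y_L]>1$. This holds whenever $\Phi(y_L)$ is not almost surely zero. In the degenerate case $\EE[Y_L]=1$, one has $Y_L\equiv1$ almost surely, so every sample $Y^{(m)}_\ell\equiv1$ and the telescoping estimator gives $\EE^{\mathrm{ML}}[Y_L]=1$ exactly. The truncation is then never active, and the second term can be dropped, i.e.\ interpreted as zero.
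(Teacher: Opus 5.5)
Your proof is correct and is essentially the paper's argument. You split the quotient error in the same way into a numerator term and a denominator term, bound the latter by $C\,|\EE[Y]-\EE^{\mathrm{ML}}_{\geq 1}[Y_L]|^2$ using $\EE^{\mathrm{ML}}_{\geq 1}[Y_L]\ge 1$, and then split on $A=\{\EE^{\mathrm{ML}}[Y_L]\ge 1\}$. The paper conditions on $A$ and bounds $\PP(A^c)$ by Chebyshev's inequality. You instead prove the pathwise inequality $(\EE[Y_L]-1)^2\mathbf{1}_{A^c}\le|\EE[Y_L]-\EE^{\mathrm{ML}}[Y_L]|^2$, which is exactly what Chebyshev integrates. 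Your version is therefore slightly sharper and avoids conditional probabilities.

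Your last paragraph, however, is wrong as argued. $Y_L\equiv 1$ does not imply $Y_\ell\equiv 1$ for the coarse levels $\ell<L$. Moreover, each level uses its own independent samples $\bsxi^{(m,\ell)}$, so the estimator \eqref{eq:ml_estimator_denominator} does not telescope pathwise. For $L=2$ it equals $1$ plus the difference of two sample means of $Y_1$ taken over the independent sets $\{\bsxi^{(m,1)}\}$ and $\{\bsxi^{(m,2)}\}$, and this difference can be negative. So the truncation can be active when $\EE[Y_L]=1$, and $D_L$ is then undefined, or $+\infty$ if $\EE[Y]>1$.

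Your conclusion that the second term can be dropped is nevertheless true, for a simpler reason you could already have used in your second case. Because $\EE[Y]\ge 1$, on $\{\EE^{\mathrm{ML}}[Y_L]<1\}$ one has $|B-\EE[Y]|=\EE[Y]-1<\EE[Y]-\EE^{\mathrm{ML}}[Y_L]$. Equivalently, $t\mapsto\max\{1,t\}$ is the projection onto $[1,\infty)$, which contains $\EE[Y]$, so it never increases the distance to $\EE[Y]$. Hence $|B-\EE[Y]|^2\le|\EE[Y]-\EE^{\mathrm{ML}}[Y_L]|^2$ holds pathwise in both cases. The $D_L$-term is therefore superfluous, though harmless for the stated bound since it is nonnegative, and no assumption $\EE[Y_L]>1$ is needed.
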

\begin{proof}
    Inserting~\eqref{eq:exact-gradient},~\eqref{eq:gradient_estimator},
    expanding the fractions and adding zero with $\pm\EE\left[X\right]\EE\left[Y\right]$, we have
    \begin{equation*}
        \begin{aligned}
            \EE_k \big[ \| \nabla \mathcal{J}(u)-\nabla \mathcal{J}^{\mathrm{ML}}(u) \|_U^2 \big]
            &=\EE_k \Biggl[
                \left\|
                    \frac{\EE\left[X\right]\EE_{\geq 1}^{\textrm{ML}}\left[Y_L\right]-\EE^{\textrm{ML}}\left[X_L\right]\EE\left[Y\right]\pm\EE\left[X\right]\EE\left[Y\right] }
                    {\EE\left[Y\right]\EE_{\geq 1}^{\textrm{ML}}\left[Y_L\right]}
                \right\|_U^2
                \Biggr] \\
            &\leq 2 \left(  C \, \EE_k \bigl[\abs{\EE\left[Y\right]-\EE_{\geq 1}^{\textrm{ML}}\left[Y_L\right]}^2 \bigr]
                        + \EE_k \bigl[ \left\| \EE\left[X\right]-\EE^{\textrm{ML}}\left[X_L\right]\right\|_U^2\bigr] \right),
        \end{aligned}
    \end{equation*}
    where we used that $\EE_{\geq 1}^{\textrm{ML}} \left[Y_L\right] \geq 1$.
    Now, let $A\in\mathcal{F}$ be the event $A \coloneqq \left\{ \mathbb{E}^{\mathrm{ML}}\left[Y_L\right]\geq 1\right\}$.
    Using the law of total expectation, we obtain
    \begin{equation*}
        \begin{aligned}
            \EE_k\left[\left|\EE[Y]-\EE_{\geq 1}^{\textrm{ML}}[Y_L]\right|^2\right]
            &=\PP(A)\EE_k\left[\left|\EE[Y]-\EE_{\geq 1}^{\textrm{ML}}[Y_L]\right|^2\Bigl| A\right]
            +\PP(A^c)\EE_k\left[\left|\EE[Y]-\EE_{\geq 1}^{\textrm{ML}}[Y_L]\right|^2\Bigl| A^c\right]\\
            &\leq \EE_k\left[\left|\EE[Y]-\EE^{\mathrm{ML}}[Y_L]\right|^2\right]+\PP(A^c)(\EE[Y]-1)^2,
        \end{aligned}
    \end{equation*}
    where in the last step we used that $|\EE[Y]-\EE^{\mathrm{ML}}[Y_L]|^2\geq 0$.
    Expressing the probability of $A^c$ as
    \begin{equation*}
        \PP(A^c)=\PP(\EE^\textrm{ML}[Y_L]< 1)=\PP(\EE^\textrm{ML}[Y_L]-\EE[Y_L]< 1-\EE[Y_L]) ,
    \end{equation*}
    we can bound it using Chebyshev's inequality and $\EE[Y_L] > 1$ by
    \begin{equation*}
        \PP(A^c) \leq \PP\left(\left|\EE[Y_L]-\EE^{\textrm{ML}}[Y_L]\right|\geq \EE[Y_L]-1\right)
        \leq
        \frac{\EE_k\left[|\EE[Y_L]-\EE^{\textrm{ML}}[Y_L]|^2\right]}{(\EE[Y_L]-1)^2} \,.
    \end{equation*}
    Collecting the estimates leads to the claim.
\end{proof}

Next, we wish to provide a complexity estimate of the adaptive
stochastic gradient algorithm with MLMC gradient estimation~\eqref{eq:gradientdescentMLMC}.
To this end, we present the following lemma which will be relevant
to bound the MSE of the MLMC gradient estimator.

\begin{lemma}
    \label{lemma:auxiliary}
    Assume that for every $\ell\in \mathbb{N}$ there exists a null sequence $\left\{ h_{\ell}\right\}_{\ell\in\mathbb{N}}$ such that for every $u\in U_{\text{ad}}$ the corresponding
    FE state and adjoint solutions satisfy
    \begin{equation}
        \label{eq:strongerror}
        \|y_{\ell}-y_{\ell-1}\|^2_{L^2(\boxN;H)}\leq C_y h_{\ell}^\beta,
        \quad \text{and} \quad
        \|B^\ast q_{\ell}-B^\ast q_{\ell-1}\|^2_{L^2(\boxN;U)}\leq C_q h_{\ell}^\beta,
    \end{equation}
    for some exponent $\beta > 0$ independent of $u$ and constants $C_y > 0$, $C_p > 0$.
    Then, for every $\theta \in (0, \infty)$, there exist constants $C_Y(\theta) > 0$
    and $C_X(\theta) > 0$ such that for every $u\in U_{\text{ad}}$
    \begin{align}
        \label{eq:decayDenom}
        \|Y_{\ell}-Y_{\ell-1}\|^2_{L^2(\boxN;\mathbb{R})}
        &\leq C_Y(\theta) h_{\ell}^\beta \,, \\
        \label{eq:decayNum}
        \|X_{\ell}-X_{\ell-1}\|^2_{L^2(\boxN;U)}
        &\leq C_X(\theta) h_{\ell}^\beta \,.
    \end{align}
\end{lemma}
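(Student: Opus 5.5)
We will prove both bounds pointwise in $\bsxi$ and then integrate. The key ingredient is a uniform $L^\infty$ bound: $U_{\mathrm{ad}}$ is bounded, $\|A(\bsxi)^{-1}\|_{\mathcal{L}(V',V)}\le 1/a_{\min}$, and the Galerkin discretization is stable (Céa/Lax--Milgram on $V_\ell$ with the same coercivity constant). Hence there is a constant $K$, independent of $\ell$, $\bsxi$ and $u\in U_{\mathrm{ad}}$, such that
\begin{equation*}
\|y_\ell(\bsxi)\|_H + \|y_\ell(\bsxi)-g\|_H + \|B^\ast q_\ell(\bsxi)\|_U \le K .
\end{equation*}
Consequently $0\le \Phi(y_\ell)\le \Phi_{\max}:=K^2/2$ and $1\le Y_\ell\le e^{\theta\Phi_{\max}}$ uniformly. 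This is the step that produces the $\theta$-dependence of the constants.

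\textbf{Bound for $Y$.} By the mean value theorem for $\exp$ on $[0,\theta\Phi_{\max}]$,
\begin{equation*}
|Y_\ell-Y_{\ell-1}|\le \theta e^{\theta\Phi_{\max}}\,|\Phi(y_\ell)-\Phi(y_{\ell-1})| .
\end{equation*}
Using $\Phi(a)-\Phi(b)=\tfrac12\langle a-b,\,a+b-2g\rangle_H$, we get
\begin{equation*}
|\Phi(y_\ell)-\Phi(y_{\ell-1})|\le K\|y_\ell-y_{\ell-1}\|_H .
\end{equation*}
Squaring, integrating over $\boxN$ and applying \eqref{eq:strongerror} yields \eqref{eq:decayDenom} with
\begin{equation*}
C_Y(\theta)=\theta^2 e^{2\theta\Phi_{\max}}K^2 C_y .
\end{equation*}

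\textbf{Bound for $X$.} Add and subtract $Y_{\ell}B^\ast q_{\ell-1}$:
\begin{equation*}
X_\ell-X_{\ell-1}=Y_\ell\,(B^\ast q_\ell-B^\ast q_{\ell-1})+(Y_\ell-Y_{\ell-1})\,B^\ast q_{\ell-1}.
\end{equation*}
By $(a+b)^2\le 2a^2+2b^2$ and the uniform bounds,
\begin{equation*}
\|X_\ell-X_{\ell-1}\|_U^2\le 2e^{2\theta\Phi_{\max}}\|B^\ast q_\ell-B^\ast q_{\ell-1}\|_U^2+2K^2|Y_\ell-Y_{\ell-1}|^2 .
\end{equation*}
Integrating and using \eqref{eq:strongerror} together with \eqref{eq:decayDenom} gives \eqref{eq:decayNum} with
\begin{equation*}
C_X(\theta)=2e^{2\theta\Phi_{\max}}C_q+2K^2C_Y(\theta).
\end{equation*}

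\textbf{Main obstacle.} The only delicate point is justifying a bound $K$ that is uniform in $\ell$, $\bsxi$ and $u$. For the adjoint this means bounding $\|q_\ell\|_V\le a_{\min}^{-1}C_{H\hookrightarrow V'}\|y_\ell-g\|_H$ via coercivity of $a(\bsxi;\cdot,\cdot)$ on $V_\ell$, and then $\|B^\ast q_\ell\|_U\le\|B\|\,\|q_\ell\|_V$. The state bound uses $\sup_{u\in U_{\mathrm{ad}}}\|u\|_U<\infty$. Once these uniform bounds are in place, everything else is the local Lipschitz continuity of $\exp$ and of the quadratic $\Phi$ on bounded sets. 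The constants grow like $e^{2\theta\Phi_{\max}}$, which is expected for the entropic risk measure.
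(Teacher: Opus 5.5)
Your proof is correct and follows essentially the same route as the paper: uniform $L^\infty$ bounds on $y_\ell$ and $B^\ast q_\ell$ give local Lipschitz constants for $\exp$ and $\Phi$, which yield \eqref{eq:decayDenom}, and an add-and-subtract splitting of the product (you insert $Y_\ell B^\ast q_{\ell-1}$, the paper inserts $Y_{\ell-1}B^\ast q_\ell$) yields \eqref{eq:decayNum}. Your version is slightly more careful than the paper's in two places: you make the bound $K$ uniform in $\ell$, so it covers both $y_\ell$ and $y_{\ell-1}$, and your $C_Y(\theta)$ correctly contains $C_y$ rather than the paper's $C_y^2$.
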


\begin{proof}
    Let $L_D$ be the Lipschitz constant of the exponential function over the ball of radius
    $R\coloneqq\theta\max\{\sup_{u\in U_{\mathrm{ad}}} \|\Phi(S[\cdot](u))\|_{L^{\infty}(\boxN;\mathbb{R})}$,
    $\sup_{u\in U_{\mathrm{ad}}} \|\Phi(S_{\ell}[\cdot](u))\|_{L^{\infty}(\boxN;\mathbb{R})}\}$,
    and $L_\Phi$ be the Lipschitz constant of $\Phi$ over the ball of radius
    $R_{\Phi}\coloneqq\sup_{u\in U_{\mathrm{ad}}} \|S_{\ell}[\cdot](u)\|_{L^{\infty}(\boxN;H)}$.
    Then, using \eqref{eq:strongerror} and the Lipschitz continuity of $\exp$ and $\Phi$,
    we obtain
    \begin{align*}
        \|\exp(\theta\Phi(y_{\ell})) - \exp(\theta\Phi(y_{\ell-1}))\|^2_{L^2(\boxN;\mathbb{R})}
        \leq (L_D\theta)^2 \|\Phi(y_{\ell}) - \Phi(y_{\ell-1}))\|^2_{L^2(\boxN;\mathbb{R})}
        \leq (L_D\theta L_\Phi C_y)^2 h_\ell^\beta,
    \end{align*}
    which proves \eqref{eq:decayDenom} with $C_Y(\theta) \coloneqq (L_D\theta L_\Phi C_y)^2$.
    Concerning \eqref{eq:decayNum}, we observe that
    \begin{align*}
        \|X_{\ell}-X_{\ell-1}\|^2_{L^2(\boxN;\mathbb{R})}
        &=
        \|\exp(\theta\Phi(y_{\ell}))B^\ast q_{\ell} - \exp(\theta\Phi(y_{\ell-1}))B^\ast q_{\ell-1}\|^2_{L^2(\boxN;U)}\\
        &\leq
        2 \Bigl(
        \|\exp(\theta\Phi(y_{\ell}))B^\ast q_{\ell} - \exp(\theta\Phi(y_{\ell-1}))B^\ast q_{\ell}\|_{L^2(\boxN;U)}^{2}\\
        &\quad\quad + \|\exp(\theta\Phi(y_{\ell-1}))B^\ast q_{\ell} -\exp(\theta\Phi(y_{\ell-1}))B^\ast q_{\ell-1}\|_{L^2(\boxN;U)}^{2}
        \Bigr) \\
        &\leq
        2 \Bigl(
        \|B^\ast q\|_{L^\infty(\boxN;U)}^{2}\|\exp(\theta\Phi(y_{\ell})) - \exp(\theta\Phi(y_{\ell-1}))\|^{2}_{L^2(\boxN;\mathbb{R})} \\
        &\quad\quad + \|\exp(\theta\Phi(y_{\ell-1}))\|^{2}_{L^\infty(\boxN;\mathbb{R})}\| B^\ast q_{\ell} -B^\ast q_{\ell-1}\|_{L^2(\boxN;U)}^{2}
        \Bigr) \,.
    \end{align*}
    Claim \eqref{eq:decayNum} then follows from the already proven \eqref{eq:decayDenom},
    assumption \eqref{eq:strongerror}, as well as the uniform
    boundedness of $q_\ell$ and $y_{\ell}$, together with the continuity of $B^\ast$ and $\Phi$.
\end{proof}

\begin{theorem}
    \label{thm:complexity_gradient_estimation}
    In addition to the assumptions of Lemma~\ref{lemma:auxiliary},
    assume that there exists an exponent $\alpha > 0$,
    and two constants $\widetilde{C}_Y(\theta) > 0$ and $\widetilde{C}_X(\theta) > 0$,
    such that for every $\theta \in (0, \infty)$,
    \begin{align}
        \label{eq:weakerror}
        \left|\mathbb{E}\left[Y-Y_\ell\right]\right|
        &\leq \widetilde{C}_Y(\theta) h_{\ell}^\alpha,\\
        \label{eq:weakerror2}
        \|\mathbb{E}\left[X-X_{\ell}\right]\|_U
        &\leq \widetilde{C}_X(\theta) h_{\ell}^\alpha,
    \end{align}
    for every $u\in U_{\mathrm{ad}}$.
    Furthermore, suppose that the cost $\mathcal{C}_\ell$ (number of floating points operations or computing time)
    to evaluate the solution operator $S_{\ell}$ and its adjoint on a level $\ell$,
    satisfies
    \begin{equation}
        \label{eq:assumption_gamma}
        \mathcal{C}_{\ell}\leq C_{\gamma} h_{\ell}^{-\gamma},
    \end{equation} for some constant $C_{\gamma} >0$ and exponent $\gamma > 0$.
    Then, for every $u\in U_{\mathrm{ad}}$
    and for any sufficiently small $\varepsilon >0$,
    there exist an integer $L$ and a MLMC hierarchy $\left\{M_{\ell}\right\}_{\ell=1}^L$,
    such that the $\varepsilon^2$-accuracy of the MLMC gradient estimator,
    \begin{align*}
        \EE_k\left[\|\nabla \mathcal{J}(u)-\nabla \mathcal{J}^{\mathrm{ML}}(u)\|_U^2 \right]\leq \varepsilon^2
    \end{align*}
    can be achieved at an asymptotic computational cost of order
    \begin{equation}
        \label{eq:cost_asymptotic}
        \mathcal{C}(\varepsilon)
        \lesssim_{\theta}
        \begin{cases}
            \varepsilon^{-2},\quad &\beta > \gamma,\\
            \varepsilon^{-2}|\log(\varepsilon)|^2,\quad &\beta= \gamma,\\
            \varepsilon^{-2-\frac{\gamma-\beta}{\alpha}},\quad &\beta< \gamma.
        \end{cases}
    \end{equation}
\end{theorem}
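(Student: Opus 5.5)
Our approach is to reduce the ratio-estimator MSE to MSEs of the two scalar/vector MLMC estimators using Proposition~\ref{proposition:bound_mean_square_error}. Then we apply the standard Giles MLMC complexity argument to each of them, using Lemma~\ref{lemma:auxiliary} for the variance decay and \eqref{eq:weakerror}--\eqref{eq:weakerror2} for the bias. By \eqref{eq:mse-gradient-estimate-1} it suffices to make each of
\[
\EE_k\bigl[|\EE[Y]-\EE^{\mathrm{ML}}[Y_L]|^2\bigr],\quad
\EE_k\bigl[|\EE[Y_L]-\EE^{\mathrm{ML}}[Y_L]|^2\bigr],\quad
\EE_k\bigl[\|\EE[X]-\EE^{\mathrm{ML}}[X_L]\|_U^2\bigr]
\]
smaller than a fixed multiple of $\varepsilon^2$, namely $\varepsilon^2/(6\max\{C,CD_L,1\})$. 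The constants $C$ and $D_L$ have to be bounded uniformly. Since $\EE[Y]\ge1$ and $\|\EE[X]\|_U$ is bounded on $U_{\mathrm{ad}}$ by the $L^\infty$ bounds on $y,q$, the constant $C$ is bounded in terms of $\theta$.

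For $D_L$ we need $\EE[Y_L]-1$ bounded away from zero for $L$ large. By \eqref{eq:weakerror}, $\EE[Y_L]\to\EE[Y]$, so $D_L\le 4$ once $\widetilde C_Y h_L^\alpha\le(\EE[Y]-1)/2$. This holds for $\varepsilon$ small, which is where ``sufficiently small $\varepsilon$'' enters. One subtlety is that $\EE[Y]-1>0$ requires $\Phi(y)$ not to be a.s.\ zero. Otherwise the $D_L$ term can be absorbed directly, because in that case $Y\equiv1$.

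Each MLMC MSE splits as bias squared plus variance. For $Z\in\{X,Y\}$,
\[
\EE_k\bigl[\|\EE[Z]-\EE^{\mathrm{ML}}[Z_L]\|^2\bigr]=\|\EE[Z-Z_L]\|^2+\sum_{\ell=1}^L\frac{\mathbb{V}[\Delta Z_\ell]}{M_\ell},
\]
using independence across levels and the telescoping identity $\EE[\EE^{\mathrm{ML}}[Z_L]]=\EE[Z_L]$. For the middle term of the proposition only the variance part appears. The bias is at most $\widetilde C_Z(\theta)h_L^\alpha$, and the variance satisfies $\mathbb{V}[\Delta Z_\ell]\le\|\Delta Z_\ell\|^2_{L^2}\le C_Z(\theta)h_\ell^\beta$ by Lemma~\ref{lemma:auxiliary}. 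The case $\ell=1$ is $\|Z_1\|^2_{L^2}$, which is bounded.

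We then assume a geometric hierarchy $h_\ell=h_0s^{-\ell}$, as is implicit in the standard theory. We choose $L=\lceil\alpha^{-1}\log_s(c\,\varepsilon^{-1})\rceil$ so that the bias squared is at most $\varepsilon^2/2$ times the required factor. The sample sizes come from the usual Lagrange-multiplier choice $M_\ell=\lceil 2\varepsilon^{-2}\sqrt{V_\ell/\mathcal C_\ell}\sum_{j}\sqrt{V_j\mathcal C_j}\,\rceil$ with $V_\ell=\max\{C_X,C_Y\}h_\ell^\beta$. The same $M_\ell$ serve both $X$ and $Y$, since they are computed from the same samples. The cost per sample on level $\ell$ is $\lesssim \mathcal C_\ell+\mathcal C_{\ell-1}\lesssim h_\ell^{-\gamma}$ by \eqref{eq:assumption_gamma}. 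The total cost is then
\[
\mathcal C(\varepsilon)\lesssim \varepsilon^{-2}\Bigl(\sum_{\ell=1}^L h_\ell^{(\beta-\gamma)/2}\Bigr)^2+\sum_{\ell=1}^L h_\ell^{-\gamma}.
\]
The case analysis on the geometric sum gives the three regimes. For $\beta>\gamma$ the sum is bounded. For $\beta=\gamma$ it is $\sim L\sim|\log\varepsilon|$. For $\beta<\gamma$ it is $\sim h_L^{(\beta-\gamma)/2}\sim\varepsilon^{-(\gamma-\beta)/(2\alpha)}$. The rounding term is $\lesssim h_L^{-\gamma}\sim\varepsilon^{-\gamma/\alpha}$, which is dominated in all cases provided $\alpha\ge\frac12\min\{\beta,\gamma\}$. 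That assumption is standard in Giles' theorem and either must be imposed or follows from $\alpha\ge\beta/2$, which holds by Jensen's inequality. All constants depend on $\theta$ through $C_X,C_Y,\widetilde C_X,\widetilde C_Y,C$, which accounts for the $\lesssim_\theta$ in \eqref{eq:cost_asymptotic}.

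The main obstacle is controlling the truncation term, namely the uniform bound on $D_L$, which requires $\EE[Y_L]>1$ with a margin. Secondary issues are the implicit geometric-mesh and $\alpha\ge\frac12\min(\beta,\gamma)$ assumptions, and checking that the $L^2$ bounds in Lemma~\ref{lemma:auxiliary} are conditional-independent of $\mathcal F_k$, which holds since $u$ is $\mathcal F_k$-measurable and the fresh samples are independent of $\mathcal F_k$.
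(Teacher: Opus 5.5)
Your proposal is correct and follows the same route as the paper. You bound the MSE via Proposition~\ref{proposition:bound_mean_square_error}, split it into bias and level variances, use \eqref{eq:weakerror} to get $D_L\le 4$ once $L\ge L_0$ (which is where ``sufficiently small $\varepsilon$'' enters), bound the level variances by Lemma~\ref{lemma:auxiliary}, and finish with the standard MLMC complexity argument. The paper simply defers that last step to the cited MLMC literature, whereas you carry it out explicitly and thereby make visible the implicit geometric mesh hierarchy and the condition $\alpha\ge\frac12\min\{\beta,\gamma\}$ needed for the rounding term.
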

\begin{proof}
    Starting from Proposition~\ref{proposition:bound_mean_square_error},
    the MSE of the gradient estimator can be further bounded via the bias-variance decomposition,
    \begin{align}
        \label{eq:num-sam-decompostion-gradient}
        \EE_k \left[ \bnorm{\nabla \mathcal{J}(u)-\nabla \mathcal{J}^{\mathrm{ML}}(u)}_U^2 \right]
        \leq 2
        \Big(
        &\underbrace{\|\EE[X-X_L]\|_U^2 + C \, \babs{\EE[Y_L - Y]}^2}_{\eqqcolon \mathrm{err}^{\mathrm{num}}} \\
        &\hspace{-3cm} + \underbrace{\mathbb{E}_k \left[ \|\mathbb{E}\left[X_{L}\right]-\mathbb{E}^{\mathrm{ML}}\left[X_{L}\right]\|_U^2\right] + C(D_L+1) \, \mathbb{E}_k \left[ \babs{\mathbb{E}\left[Y_{L}\right]-\mathbb{E}^{\mathrm{ML}}\left[Y_{L}\right]}^2\right] }_{\eqqcolon \mathrm{err}^{\mathrm{sam}}} \notag.
        \Big)
    \end{align}
    On the one hand, using the assumptions \eqref{eq:weakerror}-\eqref{eq:weakerror2}, we obtain
    \begin{equation}
        \label{eq:bound_errnum}
        \mathrm{err}^{\mathrm{num}}\leq \widetilde{C}^2_X(\theta)h_L^{2\alpha} + C \widetilde{C}^2_Y(\theta)h_L^{2\alpha},
    \end{equation}
    where $C$ is the constant introduced in Proposition~\ref{proposition:bound_mean_square_error}.
    On the other hand, concerning the variance contribution, we first remark that
    \begin{align*}
        D_L=\frac{(\EE[Y]-1)^2}{(\EE[Y_L]-1)^2}=\frac{(\EE[Y]-1)^2}{((\EE[Y]-1) + \EE[Y_L-Y] )^2}.
    \end{align*}
    Due to \eqref{eq:weakerror}, there exists a $L_0\in\mathbb{N}$ such that $|\EE[Y-Y_L]|\leq \frac{\EE[Y]-1}{2}$
    for every $L\geq L_0$, leading to the bound $D_L\leq 4$ for any $L\geq L_0$.
    Therefore, for any $L \geq L_0$, Lemma~\ref{lemma:auxiliary} implies that
    \begin{equation}
        \label{eq:bound_errsam}
        \mathrm{err}^{\mathrm{sam}}\leq \left( C_X(\theta)+5CC_Y(\theta) \right)\sum_{\ell=1}^L\frac{h_{\ell}^\beta}{M_{\ell}} \,.
    \end{equation}
    Inserting \eqref{eq:bound_errnum} and \eqref{eq:bound_errsam} into
    \eqref{eq:num-sam-decompostion-gradient} leads to the estimate
    \begin{align*}
        \EE_k \big[ \| \nabla \mathcal{J}(u)-\nabla \mathcal{J}^{\mathrm{ML}}(u)\|_U^2 \big]
        \leq 2 {(C \widetilde{C}_Y^2(\theta) + \widetilde{C}_X^2(\theta))h^{2\alpha}_{L}}
        + 2 {(5 C C_Y(\theta)+C_X(\theta))\sum_{\ell=1}^{L} \frac{h_{\ell}^{\beta}}{M_{\ell}}}
    \end{align*}
    provided that $L\geq L_0$.
    The $\varepsilon^2$-accuracy can be achieved
    by choosing a suitable integer $L$ and the MLMC hierarchy $\left\{M_{\ell}\right\}_{\ell=1}^L$
    so that both upper bounds of $\mathrm{err}^{\mathrm{num}}$ and $\mathrm{err}^{\mathrm{sam}}$
    are smaller than $\frac{\varepsilon^2}{2}$.
    In particular, note that $\mathrm{err}^{\mathrm{num}} \leq \frac{\varepsilon^2}{2}$
    requires $L\rightarrow \infty$ as $\varepsilon\rightarrow 0$.
    Hence, for any sufficiently small $\varepsilon > 0$,
    the corresponding $L=L(\varepsilon)$ satisfies $L\geq L_0$
    so that the bound \eqref{eq:bound_errsam} is valid.
    The asymptotic complexity result \eqref{eq:cost_asymptotic}
    follows then from MLMC theory, see, e.g.~\cite{cliffe2011multilevel,giles2008multilevel}.
\end{proof}

Theorem~\ref{thm:complexity_gradient_estimation} quantifies the
(asymptotic) computational cost required to achieve an MSE
of the MLMC gradient estimation below a threshold $\varepsilon^2$.
Since the cost of each optimization step is largely dominated by
the gradient computation, the total cost of the algorithm
can be expressed as $\mathcal{C}^{\text{opt}}=\sum_{k=1}^K \mathcal{C}(\varepsilon_k)$,
where $K$ denotes the total number of iterations and $\varepsilon_k^2$
is the gradient estimator accuracy at the $k$-th step.
However, the accuracies $\{\varepsilon_k\}_{k=1}^K$ are themselves random variables,
and computing the expected computational cost $\mathbb{E}\LQ \mathcal{C}^{\text{opt}}\RQ $
requires to estimate the moments of inverse powers of $\{\varepsilon_k\}_{k=1}^K$
(e.g., $\mathbb{E}\LQ \varepsilon_k^{-2}\RQ$). In the following corollary,
we simplify the analysis by considering an idealized algorithm that enforces
at each iteration an expected accuracy.
This result underestimates the true complexity.
Nevertheless, we will show in the numerical experiments of
Section~\ref{sec:numerical-experiments} that this corollary
describes the asymptotic complexity well in practice.

\begin{corollary}
    \label{cor:complexity-result}
    Consider an idealized algorithm that at each iteration imposes the idealized mean-square accuracy condition,
    \begin{equation}
        \label{eq:conditionmeansquareerror_idealized}
        \EE_k\left[\|\nabla \mathcal{J}(u_k)-\nabla \mathcal{J}^{\mathrm{ML}}(u_k)\|_U^2 \right]\leq \eta \mathbb{E}\LQ \|\mathcal{R}_{\tau}(u_k)\|_U^2\RQ.
    \end{equation}
    Assuming the optimization algorithm satisfies \eqref{eq:meansquareaccuracy}, the overall cost of the resulting optimization algorithm to achieve an expected accuracy $\varepsilon$ satisfies
    \begin{equation*}
        \mathcal{C}^{\text{opt}}\lesssim_{\theta}
        \begin{cases}
            \varepsilon^{-2}\quad & \beta>\gamma,\\
            \varepsilon^{-2}|\log \varepsilon|^2\quad & \beta=\gamma,\\
            \varepsilon^{-2-\frac{\gamma-\beta}{\alpha}}\quad & \beta<\gamma.
        \end{cases}
    \end{equation*}
\end{corollary}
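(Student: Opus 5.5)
The plan is to combine the linear convergence \eqref{eq:meansquareaccuracy} with the per-step cost of Theorem~\ref{thm:complexity_gradient_estimation}, and then sum a geometric series. Since the per-step tolerance is a deterministic multiple of an expected quantity, the random-moment issue raised before the corollary disappears.

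First, I would fix the number of iterations. To reach $\EE[\|u^\star-u_K\|_U^2]\le\varepsilon^2$ it suffices, by \eqref{eq:meansquareaccuracy}, to take
\[
K=\left\lceil \frac{\log(\|u^\star-u_0\|_U^2/\varepsilon^2)}{\log(1/\rho)}\right\rceil \sim |\log\varepsilon|.
\]

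Next, I would determine the per-step tolerance. Under \eqref{eq:conditionmeansquareerror_idealized}, the tolerance at step $k$ is the deterministic number $\varepsilon_k^2:=\eta\,\EE[\|\mathcal{R}_\tau(u_k)\|_U^2]$. I would bound it from both sides.
\begin{itemize}
\item \emph{Upper bound.} By \eqref{eq:boundreducedgradient} and \eqref{eq:meansquareaccuracy}, $\varepsilon_k^2\le \eta\tau^{-2}(1+\sqrt{1-c\tau})^2\rho^k\|u^\star-u_0\|_U^2$.
\item \emph{Lower bound.} For the cost we need a lower bound on $\varepsilon_k$, since the cost grows as $\varepsilon_k\to0$. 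Here I would use the reverse estimate $\|u-u^\star\|_U\le C'\|\mathcal{R}_\tau(u)\|_U$, which is standard for strongly convex objectives with Lipschitz gradient; one can take e.g.\ $C'=(1+L\tau)/(c\tau)$ via the projection characterization. This gives $\varepsilon_k^2\gtrsim \EE[\|u^\star-u_k\|_U^2]$.
\end{itemize}

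The lower bound is the delicate point, and I expect it to be the main obstacle. The iterates may converge faster than $\rho^k$, and then $\varepsilon_k$ becomes tiny and expensive. The cleanest way around this is to note that the algorithm may always stop as soon as the target accuracy is reached. Before stopping we have $\EE[\|u^\star-u_k\|_U^2]>\varepsilon^2$, so $\varepsilon_k\gtrsim\varepsilon$ for all $k<K$. I would combine this with a reading in which the sequence $\varepsilon_k$ behaves like $\rho^{k/2}$, i.e.\ $\varepsilon_k\simeq \rho^{k/2}$ up to constants. I would make this precise by saying the idealized algorithm uses tolerance $\max(\varepsilon_k,\varepsilon)$-type choices, or simply by arguing with the upper profile $\rho^{k/2}$ together with the fact that $\varepsilon_K\gtrsim\varepsilon$.

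Finally, I would sum the costs. Apply Theorem~\ref{thm:complexity_gradient_estimation} at each $u_k$ with tolerance $\varepsilon_k$; its constants are uniform in $u\in U_{\mathrm{ad}}$ because $C$, $C_X$, $C_Y$, $\widetilde C_X$, $\widetilde C_Y$ do not depend on $u$. This gives $\mathcal{C}(\varepsilon_k)\lesssim_\theta \varepsilon_k^{-2-s}$, with $s=0$ for $\beta>\gamma$ and $s=(\gamma-\beta)/\alpha$ for $\beta<\gamma$, and with an extra $|\log\varepsilon_k|^2$ factor when $\beta=\gamma$. Writing $\varepsilon_k\simeq \varepsilon\,\rho^{(k-K)/2}$, the sum becomes geometric:
\[
\sum_{k\le K}\varepsilon_k^{-2-s}\lesssim \varepsilon^{-2-s}\sum_{j\ge0}\rho^{j(2+s)/2}=\frac{\varepsilon^{-2-s}}{1-\rho^{(2+s)/2}}.
\]
Hence the cost is dominated by the last iterations and no $|\log\varepsilon|$ factor arises from $K$. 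In the case $\beta=\gamma$, we have $|\log\varepsilon_k|\le|\log\varepsilon|+O(1)$, so the same summation gives $\varepsilon^{-2}|\log\varepsilon|^2$. The hidden constants depend on $\theta,\eta,\tau,\rho,c,L$, which yields the three stated cases.
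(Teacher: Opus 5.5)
\textbf{The geometric lower bound on the per-step tolerance is missing.}

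Your overall route is the paper's:
\begin{itemize}
\item $K\sim|\log\varepsilon|$ iterations;
\item the per-step tolerance compared with the profile $\rho^{k/2}$ via \eqref{eq:boundreducedgradient} and \eqref{eq:meansquareaccuracy};
\item Theorem~\ref{thm:complexity_gradient_estimation} applied at each step;
\item a geometric sum dominated by the last step.
\end{itemize}
You correctly note that an upper bound on the cost needs a \emph{lower} bound on $\varepsilon_k$. However, the relation $\varepsilon_k\gtrsim\varepsilon\,\rho^{(k-K)/2}$ on which your summation rests is never proved, and neither justification you offer gives it. The upper profile $\varepsilon_k\lesssim\rho^{k/2}$ bounds the cost from below, not from above. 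The bound $\varepsilon_k\gtrsim\varepsilon$ for $k<K$ (or a tolerance of the form $\max(\varepsilon_k,\varepsilon)$, which also changes the algorithm) only gives $\sum_{k<K}\varepsilon_k^{-2-s}\lesssim K\varepsilon^{-2-s}\sim\varepsilon^{-2-s}|\log\varepsilon|$. That is exactly the logarithm you want to avoid.

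The hypothesis \eqref{eq:meansquareaccuracy} alone cannot close this either. It permits $\EE[\|u^\star-u_k\|_U^2]$ to drop at once to just above $\varepsilon^2$ and stay there for $\sim|\log\varepsilon|$ steps. The paper's proof takes the same route and simply uses the upper profile $e_k^2:=\rho^k$ as the per-step tolerance in the geometric sum, so it does not supply this lower bound either.

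\textbf{How to close it.} The missing ingredient is the one-step contraction, read backwards. Set $E_k:=\EE[\|u^\star-u_k\|_U^2]$. The proof of Theorem~\ref{thm:error} still gives $E_{k+1}\le\rho E_k$ after taking full expectations under \eqref{eq:conditionmeansquareerror_idealized}, because $\varepsilon_k$ is deterministic:
\begin{itemize}
\item the cross term is at most $2\tau\varepsilon_k\EE[\|u^\star-u_k\|_U]\le 2\tau\varepsilon_k\sqrt{E_k}$;
\item the mean-square term is at most $2\tau^2\varepsilon_k^2$;
\item $\varepsilon_k^2\le\eta\tau^{-2}(1+\sqrt{1-c\tau})^2E_k$.
\end{itemize}
Stop at the first index $K^\ast$ with $E_{K^\ast}\le\varepsilon^2$, so $K^\ast\le K$. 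Then for every $k<K^\ast$
\[
E_k\ \ge\ \rho^{-(K^\ast-1-k)}E_{K^\ast-1}\ >\ \rho^{\,k+1-K^\ast}\varepsilon^2 .
\]
Your reverse estimate turns this into $\varepsilon_k^2\ge\eta\,C'^{-2}E_k\gtrsim\varepsilon^2\rho^{\,k-K^\ast}$. This is precisely the geometric lower bound your summation needs. The rest of your argument then goes through, including the case $\beta=\gamma$, where $\varepsilon\lesssim\varepsilon_k\lesssim 1$ gives $|\log\varepsilon_k|\lesssim|\log\varepsilon|$.

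A minor point: the projection characterization yields $\|u-u^\star\|_U\le\frac{1+L\tau}{c}\|\mathcal{R}_\tau(u)\|_U$. The extra factor $1/\tau$ in your constant is spurious, and the resulting bound can fail when $\tau>1$.
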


\begin{proof}
    Due to \eqref{eq:meansquareaccuracy},
    we need to perform $K \coloneqq \bigg\lceil\left|\frac{2\log \varepsilon}{\log \rho}\right|\bigg\rceil$ iterations
    to achieve an {\em expected} accuracy $\varepsilon^2 > 0$.
    At each iteration, the cost is dominated by the gradient
    estimation so that~\eqref{eq:conditionmeansquareerror_idealized} holds.
    In expectation, the right hand side of \eqref{eq:conditionmeansquareerror} satisfies using
    \eqref{eq:boundreducedgradient},
    \begin{align*}
        \mathbb{E}\left[ \eta \|\mathcal{R}_{\tau}(u_k)\|_U^2\right]\lesssim_{\eta,\tau} \mathbb{E}\left[\|u-u_k\|_U^2\right]
        \lesssim_{\eta,\tau}
        \rho^k \eqqcolon e_k^2.
    \end{align*}
    Then using Theorem~\ref{thm:complexity_gradient_estimation},
    the expected complexity can be bounded by
    \begin{align*}
        \mathcal{C}\lesssim_{\theta}
        \begin{cases}
            \sum_{k=1}^K e^{-2}_k\quad & \beta>\gamma,\\
            \sum_{k=1}^K e^{-2}_k|\log e_k|^2\quad & \beta=\gamma,\\
            \sum_{k=1}^K e^{-2-\frac{\gamma-\beta}{\alpha}}_k\quad & \beta<\gamma
        \end{cases}\quad
        \lesssim_{\theta} \quad
        \begin{cases}
            e^{-2}_K\quad & \beta>\gamma,\\
            e^{-2}_K|\log e_k|^2\quad & \beta=\gamma,\\
            e^{-2-\frac{\gamma-\beta}{\alpha}}_K\quad & \beta<\gamma,
        \end{cases}
    \end{align*}
    where in the last step we use a geometric series.
    The claim then follows since $e_K\cong\varepsilon$.
\end{proof}

    \section{Algorithmic and Implementation Details}
\label{sec:algorithmic-and-implementation-details}

To implement the iteration scheme~\eqref{eq:gradientdescentMLMC}
and to confirm our theoretical results, we use the framework of the adaptive
MLSGD algorithm proposed in~\cite{baumgarten2025multilevel},
together with the gradient estimator~\eqref{eq:gradient_estimator}.
Crucial to achieve the complexity of the gradient estimator
stated in Theorem~\ref{thm:complexity_gradient_estimation} in practice,
is the appropriate selection of the batch size and of the
largest level \textit{on-the-fly} at each iteration.
Since these change in every step, we attach to all previously introduced
quantities an additional index $k$, i.e.,
$\set{M_{k,\ell}}_{{\ell=1}}^{L_k}$ and $L_k$ are the multilevel batch size
and the largest level ensuring that an accuracy $\varepsilon_k^2 > 0$ is achieved at iteration $k$.
Details on this are given in Subsection~\ref{subsec:adaptive-sampling-and-mesh-refinement}.

Once the optimal multilevel batch size $\set{M_{k,\ell}}_{{\ell=1}}^{L_k}$ has been determined,
we have to find an effective way to distribute the computational load across multiple processing units.
While batched SGD methods can be embarrassingly parallel in each optimization step,
a multilevel batch is more challenging to distribute.
Lower and cheap levels require a sample parallelization,
while the high and expensive levels call for spatial
domain decomposition to exploit parallel computing systems.
Subsection~\ref{subsec:batch-parallelization} details the
consequences of this for a simplified small batch example in two spatial dimensions
and on $P=16$ processing units.

The whole procedure is summarized in Algorithm~\ref{alg:mlsgd}.
Explanations of the \texttt{MLSGD} function are provided in
Subsection~\ref{subsec:adaptive-sampling-and-mesh-refinement}.
Details of the batch distribution within the \texttt{GradientEstimate}
function and the data-merging procedures within \texttt{PairwiseUpdate}
are discussed in Subsection~\ref{subsec:batch-parallelization}.

    \subsection{MLSGD with Adaptive Sampling and Mesh Refinement}
\label{subsec:adaptive-sampling-and-mesh-refinement}

We know from Theorem~\ref{thm:complexity_gradient_estimation},
in particular from equation~\eqref{eq:num-sam-decompostion-gradient}, that
the MSE in the gradient estimation
decomposes into two components: the numerical error $\mathrm{err}_k^{\mathrm{num}}$
and the sampling error $\mathrm{err}_k^{\mathrm{sam}}$.
To choose both the finest refinement level and the optimal batch size,
knowledge of the constants, which depend on the risk-aversion parameter $\theta$,
as well as of the rates $\alpha$ and $\beta$, is required
to control both error components and achieve optimal complexity.

To this end, we estimate the error components
of~\eqref{eq:num-sam-decompostion-gradient} online as the algorithm runs
using techniques discussed in detail in~\cite{baumgarten2025budgeted}.
Particularly, we estimate on each level $\ell$ the second order power sums
\begin{equation}
    \label{eq:sec-order-sum}
    v_{k, \ell}^2 \coloneqq
    \sum_{m=1}^{M_{k, \ell}}
    \big\|\Delta X_{\ell}^{(m,k)} - \EE^{\text{MC}}_{k}[\Delta X_\ell] \big\|_U^2
    + \widehat{C} \sum_{m=1}^{M_{k, \ell}} \big|\Delta Y_{\ell}^{(m,k)} - \EE^{\text{MC}}_{k}[\Delta Y_\ell] \big|^2
\end{equation}
where $\widehat{C} \coloneqq \frac{\|\EE^{\mathrm{ML}}_k[X_L]\|_U^2}{\EE^{\mathrm{ML}}_k[Y_L]^2}$
is a sample-based approximation of the constant $C$
of Proposition~\ref{proposition:bound_mean_square_error},
and $\EE^{\text{MC}}_{k}$ represents the sample average over $M_{k,\ell}$
realizations\textsuperscript{1}\footnote{
    \textsuperscript{1}In the numerical experiments,
    we neglect the constant $D_L$ used for the theoretical
    analysis due to the use of the biased estimator
    $\EE^{\textrm{ML}}_{\geq 1}[\cdot]$.
    We never encountered an instance in which
    $\EE^{\textrm{ML}}[Y_L]$ was smaller than one.
}.
Similar to~\cite[Sec.~3]{Giles_2015},
the optimal batch size $\tset{M_{k,\ell}}_{\ell=1}^{L_k}$ is determined
through minimizing the computational cost while achieving
the target sampling error $\mathrm{err}_k^{\mathrm{sam}} = \varepsilon_k^2/2$
in each iteration $k$, yielding
\begin{equation}
    \label{eq:optimal-batch-size}
    M_{k, \ell}^{\text{opt}}
    = \left\lceil
          \left(\sqrt{2} \varepsilon_{k}\right)^{-2}\!\!
          \sqrt{\frac{v_{k-1,\ell}^2}{(M_{k - 1, \ell} - 1) \, \mathcal{C}_{k - 1, \ell}}}
          \left(
              \sum_{\ell'=1}^{L_{k}}
              \sqrt{\frac{v_{k,\ell'}^2 \, \mathcal{C}_{k-1, \ell'}}{M_{k - 1, \ell'} - 1}} \,\,
          \right)
    \right\rceil \,.
\end{equation}
Here, $\mathcal{C}_{k - 1, \ell}$ is not estimated (e.g., as a function of the size of the finite element approximation space),
but it is the actual measured average cost on level $\ell$ of the previous iteration in CPU seconds.
The optimal level $L_k$ is chosen as either $L_{k-1}$ or $L_{k-1} + 1$,
depending on whether the estimated bias exceeds the prescribed tolerance.
Motivated by~\eqref{eq:num-sam-decompostion-gradient}
and~\cite[Sec.~3]{Giles_2015},
we estimate the convergence rate $\widehat{\alpha}$ experimentally
via a least-squares fit and define the squared bias estimate by
\begin{equation}
    \label{eq:squared-bias-estimate-mlmc}
    \widehat{\mathrm{err}}^{\text{num}}_k
    \coloneqq \max_{\ell = 2, \dots, L} \left( \frac{\|\EE^{\text{MC}}_k[\Delta X_\ell] \|^2 + \widehat{C} \, | \EE^{\text{MC}}_k[\Delta Y_\ell] |^2} {(((h_{\ell-1} / h_\ell)^{\widehat{\alpha}} - 1) (h_L/h_\ell)^{\widehat{\alpha}})^2} \right) \,.
\end{equation}
The level update is then given by
\begin{equation}
    \label{eq:optimal-level}
    L_k =
    \begin{cases}
        L_{k-1} + 1, & \text{ if } \widehat{\mathrm{err}}^{\text{num}}_k > \varepsilon_k^2/2, \\[1ex]
        L_{k-1}, & \text{ otherwise.}
    \end{cases}
\end{equation}
Algorithm~\ref{alg:mlsgd}, starting in the \texttt{MLSGD} function,
outlines an implementation of
the introduced estimators in the optimization process.
The illustrated algorithm aims to achieve a gradient norm
smaller than some chosen accuracy $\tilde{\varepsilon} > 0$,
using the MSE estimate
\begin{equation}
    \label{eq:mse-estimate-mlmc}
    \widehat{\mathrm{err}}^{\text{mse}}_k \coloneqq
    \widehat{\mathrm{err}}^{\text{num}}_k + \widehat{\mathrm{err}}^{\text{sam}}_k
    \quad \text{with} \quad
    \widehat{\mathrm{err}}^{\text{sam}}_k\coloneq \sum_{\ell=1}^{L_k} \frac{v_{k, \ell}^2}{M_{k, \ell}(M_{k, \ell}-1)}
\end{equation}
to monitor both error contributions.

While the \texttt{MLSGD} function controls the batch size and performs the
iteration scheme~\eqref{eq:gradientdescentMLMC},
the \texttt{GradientEstimate} function,
called for every optimization step,
dominates the cost of the algorithm as multiple PDEs have to be solved
on different discretization levels and for different input samples
in order to construct~\eqref{eq:shorthand-notation}
and~\eqref{eq:delta-notation}.
To mitigate this high cost, we outline in the
following how to distribute the PDE solves.

\begin{algorithm}
    \caption{Distributed Adaptive Multilevel Stochastic Gradient Descent (MLSGD)}
    \label{alg:mlsgd}
    \small{
        \begin{align*}
            &\texttt{function MLSGD}(u_0, \{M_{0, \ell}\}_{\ell=1}^{L_0}, \tilde{\varepsilon}) \colon \\
            &\quad
            \begin{cases}
                \texttt{while } \tnorm{\nabla \mathcal{J}^{\mathrm{ML}}(u_k)}_U > \tilde{\varepsilon} \colon \\
                \quad
                \begin{cases}
                    \text{// Determine the level $L_k$ and the optimal batch $\{M_{k,\ell}\}_{\ell=1}^{L_k}$} \\
                    \texttt{if } k \neq 0 \colon \, L_k \leftarrow \eqref{eq:optimal-level} \quad
                    \{M_{k,\ell}\}_{\ell=1}^{L_k} \leftarrow \eqref {eq:optimal-batch-size} \\[1mm]
                    \text{// Compute the gradient estimate following~\eqref{eq:gradient_estimator}} \\
                    \nabla \mathcal{J}^{\mathrm{ML}}(u_k) \leftarrow \texttt{GradientEstimate}(u_k, \{M_{k,\ell}\}_{\ell=1}^{L_k}) \\[1mm]
                    \text{// Update control considering feasible set following~\eqref{eq:gradientdescentMLMC}} \\
                    u_{k+1} \leftarrow \Pi_{U_{\text{ad}}} \left(u_{k} - \tau \nabla \mathcal{J}^{\mathrm{ML}}(u_k)\right) \\[1mm]
                    \text{// Update iteration } k \text{ and target } \varepsilon_k \text{ with } \eta \in (0, 1) \\
                    k \leftarrow k + 1 \quad \varepsilon_k \leftarrow \eta \tnorm{\nabla \mathcal{J}^{\mathrm{ML}}(u_k)}_U
                \end{cases} \\
                \texttt{return } u_{k+1}
            \end{cases} \\
            &\texttt{function GradientEstimate}(u_k, \{M_{k,\ell}\}_{\ell=1}^{L_k}) \colon \\[-1mm]
            &\quad
            \begin{cases}
                \texttt{for } \ell = 1 \texttt{ to } L \colon \\
                \quad
                \begin{cases}
                    \text{// Distribute PDE solves as outlined in Subsection~\ref{subsec:batch-parallelization}} \\
                    \texttt{run in parallel for } m = 1, \dots, M_{k,\ell} \colon \\
                    \quad
                    \begin{cases}
                        \text{// Draw } \bsxi^{(m,\ell)}\!, \text{ solve state and adjoint on } \ell \text{ and } \ell - 1 \\
                        \tset{X_{\ell}^{(m)} \!, X_{\ell-1}^{(m)} \!, \, Y_{\ell}^{(m)} \!, \, Y_{\ell-1}^{(m)}}_{m=1}^{M_{k,\ell}}
                        \leftarrow \eqref{eq:shorthand-notation} \\
                        \tset{\Delta X_{\ell}^{(m)} \!, \, \Delta Y_{\ell}^{(m)}}_{m=1}^{M_{k,\ell}}
                        \leftarrow \eqref{eq:delta-notation}
                    \end{cases} \\\\[-2mm]
                    \text{// Merge distributed data to enable control update} \\
                    \EE_{k}^{\mathrm{MC}}[\Delta X_\ell], \, \EE_{k}^{\mathrm{MC}}[\Delta Y_\ell]
                    \leftarrow \texttt{PairwiseUpdate}(\tset{\Delta X_{\ell}^{(m)}, \Delta Y_{\ell}^{(m)}}_{m=1}^{M_{k,\ell}}) \\[1mm]
                    \text{// Compute level sums~\eqref{eq:ml_estimator_numerator} and~\eqref{eq:ml_estimator_denominator}} \\
                    \EE^{\mathrm{ML}}_k[X_{\ell}] \leftarrow \EE^{\mathrm{ML}}_k[X_{\ell-1}] + \EE_k^{\mathrm{MC}}[\Delta X_{\ell}] \\
                    \EE^{\mathrm{ML}}_k[Y_{\ell}] \hspace{0.6mm} \leftarrow \EE^{\mathrm{ML}}_k[Y_{\ell-1}] + \EE_k^{\mathrm{MC}}[\Delta Y_{\ell}]
                \end{cases} \\\\[-2mm]
                \texttt{return } \EE^{\mathrm{ML}}_k[X_L] / \EE^{\mathrm{ML}}_k[Y_L] + \lambda u_k
            \end{cases} \\
            &\texttt{function PairwiseUpdate}(\tset{\Delta X_{\ell}^{(m)}, \Delta Y_{\ell}^{(m)}}_{m=1}^{M_{k,\ell}}) \colon \\
            &\quad
            \begin{cases}
                \text{// Determine communication split } s_{k,\ell} \text{ and initialize starting distance} \\
                s_{k, \ell} \leftarrow \eqref{eq:optimal-parallelization}
                \quad
                \tilde d \leftarrow \log_2(P) - 1 \\[1mm]
                \texttt{while } \tilde d \geq \log_2(P) - s_{k,\ell} \colon \\[0mm]
                \quad
                \begin{cases}
                    &\hspace{-18mm} \text{// Join estimates of process groups $A$ and $B$ at distance $d=2^{\tilde d}$} \\
                    \,\,\,\,\, M_{AB}      &\leftarrow \,\,\, M_{A} \, +_d \, M_{B} \\
                    \delta_{AB}[\Delta Y_\ell] &\leftarrow \,\,\, \EE_A^{\mathrm{MC}}[\Delta Y_\ell] \, -_d \, \EE_B^{\mathrm{MC}}[\Delta Y_\ell] \\
                    \delta_{AB}[\Delta X_\ell] &\leftarrow \,\,\, \EE_A^{\mathrm{MC}}[\Delta X_\ell] \, \ominus_d \, \EE_B^{\mathrm{MC}}[\Delta X_\ell] \\
                    \EE_{AB}^{\mathrm{MC}}[\Delta Y_\ell] &\leftarrow \,\,\, \EE_B^{\mathrm{MC}}[\Delta Y_\ell] \,\, +_d \, \tfrac{M_A}{M_{AB}} \delta_{AB}[\Delta Y_\ell] \\
                    \EE_{AB}^{\mathrm{MC}}[\Delta X_\ell] &\leftarrow \,\,\, \EE_B^{\mathrm{MC}}[\Delta X_\ell] \,\, \oplus_d \, \tfrac{M_A}{M_{AB}} \delta_{AB}[\Delta X_\ell] \\[1mm]
                    &\hspace{-18mm} \text{// Select new sets $A$ and $B$ by reducing the distance $d=2^{\tilde d}$} \\
                    \,\,\,\,\,\, \tilde d &\leftarrow \,\,\, \tilde d - 1
                \end{cases} \\
                \texttt{return } \texttt{SelectSubdomain}(\EE_{AB}^{\mathrm{MC}}[\Delta X_\ell]), \EE_{AB}^{\mathrm{MC}}[\Delta Y_\ell]  \\[1mm]
            \end{cases} \\
            &\texttt{function SelectSubdomain}(\EE_{AB}^{\mathrm{MC}}[\Delta X_\ell]) \colon \\
            &\quad
            \begin{cases}
                \texttt{for } x \in \mathcal{D} \colon \\
                \quad
                \begin{cases}
                    \text{// Checking for condition~\eqref{eq:congruent-processes} and setting the data} \\
                    \texttt{if } \pi_0(x) \in \pi_s(x) \colon
                    \hspace{2mm} \EE_{k}^{\mathrm{MC}}[\Delta X_\ell](x) \leftarrow \EE_{AB}^{\mathrm{MC}}[\Delta X_\ell](x) \\
                    \texttt{else } \colon
                    \hspace{18mm} \texttt{Error("x not found")}
                \end{cases} \\
                \texttt{return } \EE_{k}^{\mathrm{MC}}[\Delta X_\ell]
            \end{cases}
        \end{align*}
    }
\end{algorithm}

\begin{figure}[H]
    \centering
    \includegraphics[angle=90,width=0.6\textheight]{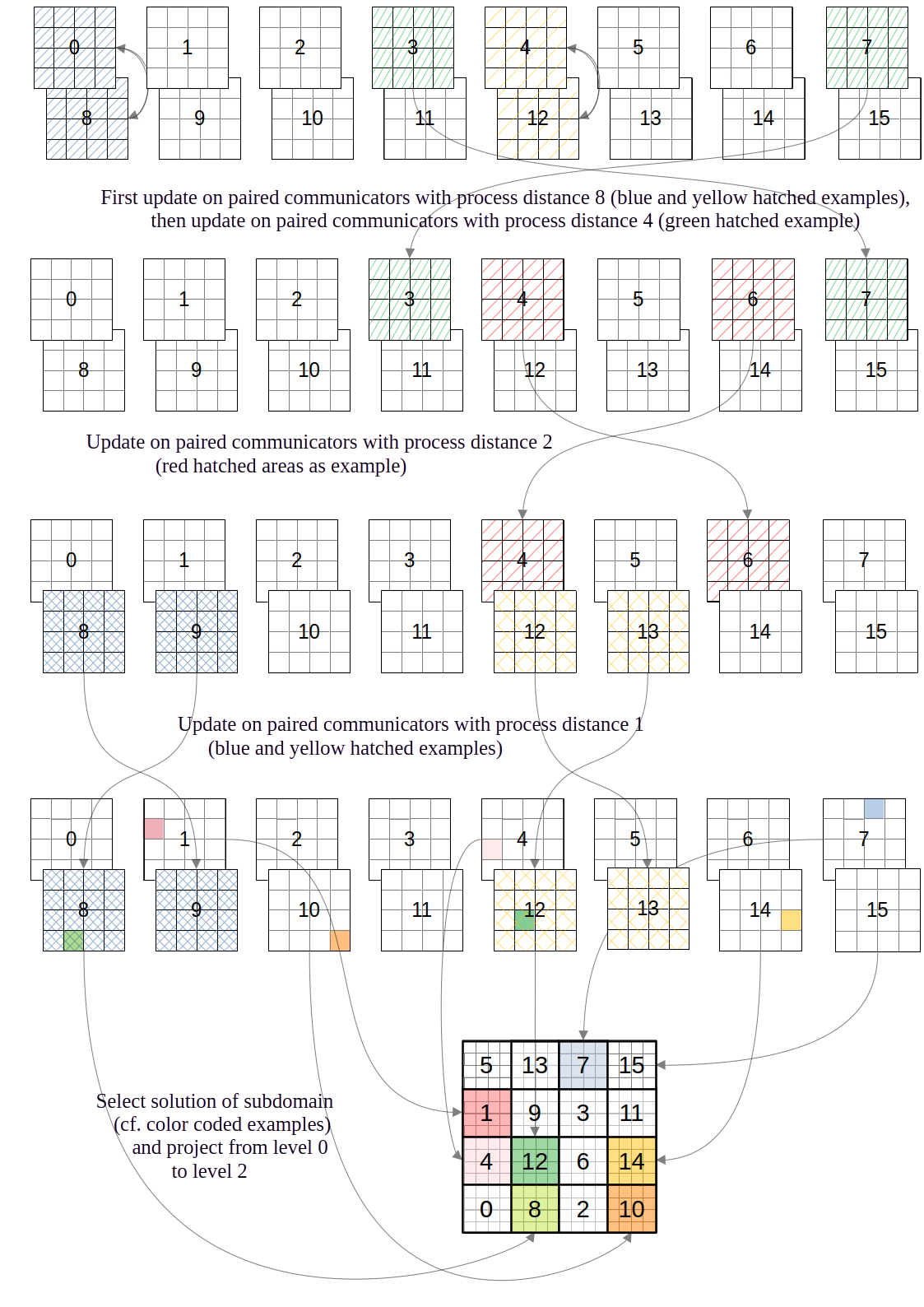}
    \caption{Update procedure for $M_{k,\ell=1} = 16$ parallel samples on level $\ell=1$.
    The result is accumulated onto one domain distributed data structure on level $\ell=3$.}
    \label{fig:update}
    \vspace{0.8cm}
    \vfill
    \includegraphics[angle=90,width=0.6\textheight]{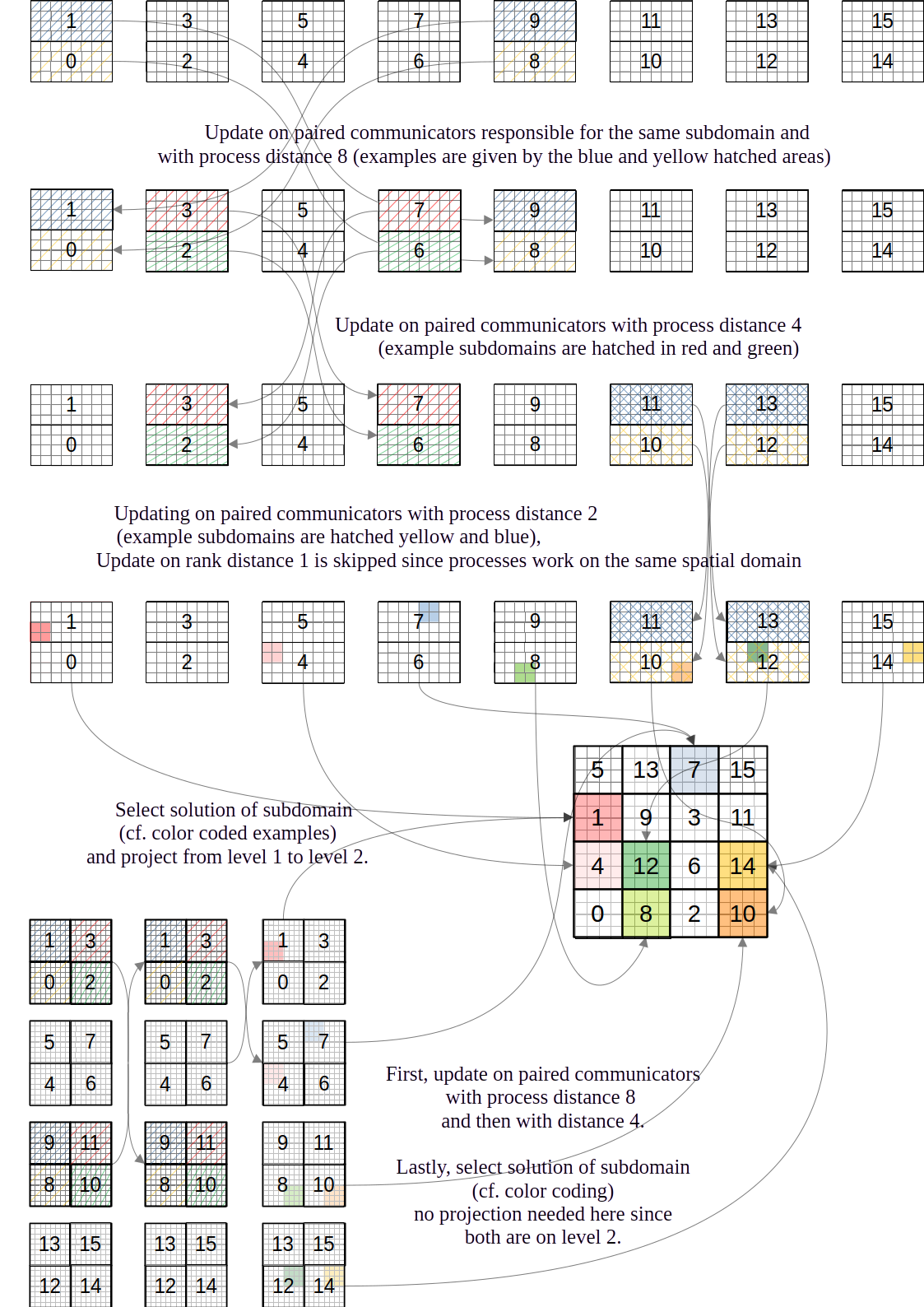}
    \caption{
        Update procedure for $M_{k,\ell=2} = 8$
        and $M_{k,\ell=3} = 4$ parallel samples.
        The result is accumulated onto one domain distributed data structure on level $\ell=3$.
    }
    \label{fig:update2}
\end{figure}
    \subsection{Batch Parallelization}
\label{subsec:batch-parallelization}

We now describe how to maximize computational efficiency
by balancing sample-level parallelism with subdomain decompositions.
Suppose the task at optimization step $k$ is to compute
\begin{equation}
    \label{eq:example-batch}
    M_{k,\ell=1} = 16, \quad M_{k,\ell=2} = 8, \quad M_{k,\ell=3} = 4
\end{equation}
samples on $P = 16$ parallel processes on their respective processing units.
The goal is to accumulate the statistics of all
samples in a parallel pass through the batch data
using the pairwise update formulas, merging successively two statistical results,
as introduced in~\cite{chan1982updating, pebay2016numerically, welford1962note}
onto a single representation on level $L=3$.
In particular, we want to be able to represent
$\EE^{\mathrm{ML}}[X_L]$ in a domain distributed FE basis
and to have $\EE^{\mathrm{ML}}[Y_L]$ available at all processes
to compute the final gradient estimator according to~\eqref{eq:gradient_estimator}.

To illustrate this, we refer to the Figures~\ref{fig:update} and~\ref{fig:update2},
where the FE mesh, on which we want to store $\EE^{\mathrm{ML}}[X_L]$,
is shown as the large quadratic mesh with $16$ subdomains, each with an assigned process numbered from $0$ to $15$.
At the hand of these figures, we describe the hierarchical communication
and update process (indicated by the arrows) and the three key ingredients
enabling the efficient accumulation of the batch data:
\begin{enumerate}
    \item[a)] a \textit{communication split index} $s_{k,\ell} \in \mathbb{N}_0$
    as in~\cite{baumgarten2025budgeted, baumgarten2024fully}.
    \item[b)] a \textit{pairwise update scheme} following~\cite{chan1982updating}.
    \item[c)] a \textit{suitable domain decomposition} to avoid unnecessary communication.
\end{enumerate}

\textit{a) Communication Split Index.}
In order to find the optimal distribution strategy for
the samples on each level $\ell$ and in each step $k$,
we have to find the optimal number of communication splits $s_{k,\ell} \in \mathbb{N}_0$.
That is, we determine by the formula
\begin{equation}
    \label{eq:optimal-parallelization}
    s_{k,\ell} = \lceil \log_2 (\min \set{P, M_{k,\ell}, M_{\ell}^{\max}}) \rceil
\end{equation}
how many times the MPI world communicator is split
into two groups of size $P_{s_{k,\ell}}$.
Here, $M_{\ell}^{\max}$ is the maximal number of samples we can compute at the same time on level $\ell$,
due to memory constraints, though we assume in this section,
that $M_{\ell}^{\max}$ is sufficiently large to not affect the optimal distribution strategy.
The index $s_{k,\ell}=0$ represents the MPI world communicator with size $P_0 = P$,
$s_{k,\ell}=1$ divides the processes into two groups of size $P_1 = P / 2$,
and $s_{k,\ell} = \log_2(P)$ completely separates the processes
into individual communicators assuming $P$ is a power of $2$.

Therefore, in example~\eqref{eq:example-batch} and
by following the formula~\eqref{eq:optimal-parallelization},
the optimal way to distribute the $M_{k,1} = 16$
samples on $P = 16$ processing units
is to completely distribute samples over all processes,
and thus $s_{k,1} = 4$ leads to a communicator size of $P_4 = 1$.
This scenario is illustrated in Figure~\ref{fig:update},
where the top row represents the 16 different samples,
each computed on a different process.
Similar to the example above, the optimal distribution
on $\ell=2$ and $\ell=3$ result in $s_{k,2} = 3$ and $s_{k,3} = 2$.
Finding the communication split $s_{k,\ell}$ is
the first step in the \texttt{PairwiseUpdate} function of Algorithm~\ref{alg:mlsgd}
to determine how the data is distributed.

\smallskip

\textit{b) Pairwise Update Scheme.}
Next, we use the formulas
from~\cite{chan1982updating, pebay2016numerically, welford1962note}
to merge the sample data with a pairwise algorithm, eventually
combining the $M_{k,\ell}$ distributed samples into a common estimates $\EE_k^{\mathrm{MC}}[\Delta X_{\ell}]$ and $ \EE_k^{\mathrm{MC}}[\Delta Y_{\ell}]$.
Figure~\ref{fig:update} illustrates this procedure.
In the top row, processes are paired with others that are $d=8$ positions apart;
for example, process~0 communicates with process~8 (blue-hatched areas),
and process~4 with process~12 (yellow-hatched areas).
The transition to the second row represents the
next update with distance $d=4$ (green-hatched example).
The same procedure is then applied for distances $d=2$
(red-hatched example) and $d=1$
(blue- and yellow-hatched examples; see the middle and lower rows of Figure~\ref{fig:update}).
This results in common estimates $\EE_k^{\mathrm{MC}}[\Delta X_{\ell}]$ and $ \EE_k^{\mathrm{MC}}[\Delta Y_{\ell}]$ based on all samples of one batch.
This data corresponds to one particular level and may be domain distributed for $\EE_k^{\mathrm{MC}}[\Delta X_{\ell}]$
and copied on each process for $ \EE_k^{\mathrm{MC}}[\Delta Y_{\ell}]$.

Since $M_{k,1} \geq \dots \geq M_{k,L}$, the communication index $s_{k,\ell}$
may differ across levels due to~\eqref{eq:optimal-parallelization},
(see also Figure~\ref{fig:nodes-experiment} on the bottom left)
leading to different data distributions.
As mentioned, with $M_{k,2}=8$ as in~\eqref{eq:example-batch}
and $P=16$, we obtain $s_{k,2}=3$, so two processing units ($P_3=2$)
are combined to compute one sample (see the top row of Figure~\ref{fig:update2}).
However, data from different levels and communication splits
$s_{k,\ell}$ must ultimately be merged into a common data
structure on the largest level and the MPI world communicator.
As illustrated in the bottom row of Figure~\ref{fig:update},
this transition, from a sample-distributed structure to
the domain-distributed global grid, is achieved by assigning
and projecting the data to the appropriate subdomains of the FE mesh.
In the figure, this assignment is marked in red for process~1,
in blue for process~7, and analogously for the
remaining processes using solid color markings.

While in Figure~\ref{fig:update} the pairwise update,
selection, and projection are straightforward,
these operations are more involved when $s_{k,\ell} \neq \log_2(P)$,
as shown in Figure~\ref{fig:update2}.
The update from levels $\ell=2$ and $\ell=3$
is illustrated using the same color coding to
highlight the pairwise communication and selection steps.
The pairwise update follows a similar idea as for $s_{k,\ell}=\log_2(P)$,
but the partial domain decomposition must be preserved.
In the top row of Figure~\ref{fig:update2}, for instance,
process~0 and process~8 (yellow-hatched areas) are
paired to update the statistics on the lower subdomain,
while process~1 and process~9 (blue-hatched areas) update the upper subdomain.
On level $\ell=2$, the update for process distance $d=1$
is omitted because processes at this distance already work on the same sample.
The same occurs on level $\ell=3$ (lower left of Figure~\ref{fig:update2}),
where $s_{k,\ell}=2$ due to $M_{k,\ell=3}=4$
and~\eqref{eq:optimal-parallelization};
here the updates for $d=1$ and $d=2$ are skipped
since all sample data has already been merged.

The update procedure, implementing the formulas from~\cite{chan1982updating, pebay2016numerically, welford1962note},
is formally summarized by the \texttt{while} loop in the \texttt{PairwiseUpdate} function
of Algorithm~\ref{alg:mlsgd}.
To this end, we introduce
the following notation for algebraic operations:
Considering a pair of process sets $A, B \subset \mathcal{P}$,
$\mathcal{P}$ being the set of all processing units,
at distance $d \in \mathbb N$, we define
\begin{align*}
    +_d \colon \mathbb{R} \times \mathbb{R} \rightarrow \mathbb{R},
    \quad
    -_d \colon \mathbb{R} \times \mathbb{R} \rightarrow \mathbb{R}
    \quad
    \oplus_d \colon V_\ell \times V_\ell \rightarrow V_\ell,
    \quad \ominus_d \colon V_\ell \times V_\ell \rightarrow V_\ell \, .
\end{align*}
The operations $+_d$ and $-_d$ denote the addition and
subtraction of two floating-point numbers,
one stored on process set $A$ and the other on process set $B$,
where as the result of the addition is then stored on both.
Similarly, the operations $\oplus_d$ and $\ominus_d$ denote
component-wise addition and subtraction
of two coefficient vectors representing solutions in the FE space $V_\ell$
where again the result is stored on both sets $A$ and $B$.
Importantly, these coefficient vectors may already be stored in a domain-distributed manner,
since the operations $\oplus_d$ and $\ominus_d$ only combine data assigned to
processes at distance $d$, thereby preserving the underlying subdomain decomposition.

Using these operations, we accumulate the final statistics,
where $M_{AB}$, $\EE^{\mathrm{MC}}_{AB}[\Delta Y_\ell]$,
and $\EE^{\mathrm{MC}}_{AB}[\Delta X_\ell]$
denote the quantities obtained by combining process sets $A$ and $B$.
The merging procedure is repeated as long as the distance between process sets
exceeds the distance associated with the domain decomposition.

\smallskip

\textit{c) Suitable Domain Decomposition.}
The selection of data in the mixed sample and domain distributed
data structures can be performed without additional communication
only if the domain decomposition across processes is chosen carefully.
For example, on levels $\ell=2$ and $\ell=3$, process~7 (solid blue area)
can access the required global grid data only if the mixed
data structures assign the same subdomain to process~7.
Otherwise, the data is not locally available
and additional communication is required.

To solve the aforementioned issue, we focus the explanation on $\ell = 3$
illustrated in the lower left corner in Figure~\ref{fig:update2}
and define $\pi_{s_{k,\ell}} \colon \mathcal{D} \rightarrow \mathcal{P}_{s_{k,\ell}}$
as the map assigning to every nodal point $x$ in the domain $\mathcal{D}$
a set of processes $\mathcal{P}_{s_{k,\ell}}(x)$ of size $\abs{\mathcal{P}_{s_{k,\ell}}(x)} \equiv P_{s_{k,\ell}}$.
In the given example for $\ell=3$,
the origin $x = 0$ in the domain $\mathcal{D}$ (lower left corner)
is assigned to $\pi_2(0) = \set{0, 4, 8, 12}$.
Since $x = 0$ is also assigned in the world communicator to $\pi_0(0) = \set{0}$,
the data is locally available to process~0,
and no communication is required for the selection step.
Hence, we enforce $\pi_0(x) \subseteq \pi_{s_{k,\ell}}(x)$
for all nodal points $x \in \mathcal{D}$ by the congruence condition
\begin{equation}
\label{eq:congruent-processes}
A \equiv B \pmod{P_{s_{k,\ell}}} \quad \text{for all} \quad A, B \in \mathcal{P}_{s_{k,\ell}}(x) = \pi_{s_{k,\ell}}(x)
\end{equation}
for any communication split $s_{k,\ell}$.
In other words, the distance $d = B - A$ of processes which store data to the same point
$x \in \mathcal{D}$ has to be a multiple of the communicator sizes $P_{s_{k,\ell}}$
for all communication splits $s_{k,\ell}$.
This selection step is the final operation performed on
$\EE^{\mathrm{MC}}_{AB}[\Delta X_\ell]$ in the \texttt{PairwiseUpdate}
function and done by the \texttt{SelectSubdomain} function
checking for condition~\eqref{eq:congruent-processes}.

In conclusion, the example outlines three ingredients of the update procedure:
a) the definition of the communication split index $s_{k,\ell}$ in~\eqref{eq:optimal-parallelization},
b) the pairwise update of the sample data following~\cite{chan1982updating},
and c) the congruence condition~\eqref{eq:congruent-processes} to ensure a
communication-free selection of the data.
As a result, the update procedure only uses the operations
that are statistically necessary by~\cite{chan1982updating}.
We also remark that we made no assumptions on the shape of the domain
or the distribution algorithm except that the congruence
condition~\eqref{eq:congruent-processes} has to be satisfied.
For example, simple recursive coordinate bisection
with appropriate process assignment can be used
to achieve~\eqref{eq:congruent-processes}.
    \section{Numerical Experiments}
\label{sec:numerical-experiments}

To examine and validate the developed algorithm
and the corresponding convergence analysis,
we conduct numerical experiments on a risk-averse three-dimensional
elliptic optimal control problem.
In particular, we use $\mathcal{D}=(0,1)^3, U=H=L^2(\mathcal D)$ and $V=\mathrm{H}^1_0(\mathcal{D})$,
and consider the elliptic problem \eqref{eq:sys} with
\begin{align*}
    \langle A(\bsxi) y(\bsxi), v\rangle_{V',V} &= \int_\mathcal{D} \exp(\tilde{a}\left(\xb,\bsxi)\right) \nabla y(\bsxi, \xb) \cdot \nabla v(\xb) \,\mathrm d \xb,\\
    \langle Bu, v\rangle_{V',V}&=\int_\mathcal{D}  u(\xb) v(\xb) \,\mathrm d \xb.
\end{align*}
Motivated by the two-dimensional example~\cite[Ex.~9.37]{lord2014stochasticPDE},
the realizations of the random coefficient $\tilde{a}$ are generated
by a truncated Karhunen--Loève Expansion
\begin{align*}
    \tilde{a}(\xb,\omega) = \mu_0(\xb) + \sum_{j,k,l=1}^{5} \sqrt{\lambda_{jkl}} \, \phi_{jkl}(\xb) \, \xi_{jkl}(\omega),
    \quad \xi_{jkl}\sim \mathcal{U} \left(-4,4\right) \,\,\, \text{iid},
\end{align*}
where $\phi_{jkl}(\xb) = \cos(j\pi x_2) \, \cos(k\pi x_3) \, \cos(l\pi x_1)$
and $\lambda_{jkl}=\exp(-\pi (j+k+l) \, \rho)$
for $j,k,l\geq 1$ with correlation length $\rho=0.15$ and mean $\mu_0\equiv 0$.
Illustrations of different samples of the random field are shown in Figure~\ref{fig:samples}.
The target state in~\eqref{eq:cost}
is chosen as $g(\xb)=\sin(2\pi x_1) \sin(2\pi x_2) \sin(2\pi x_3)$.

\begin{figure}
    \centering
    \includegraphics[width=0.40\linewidth]{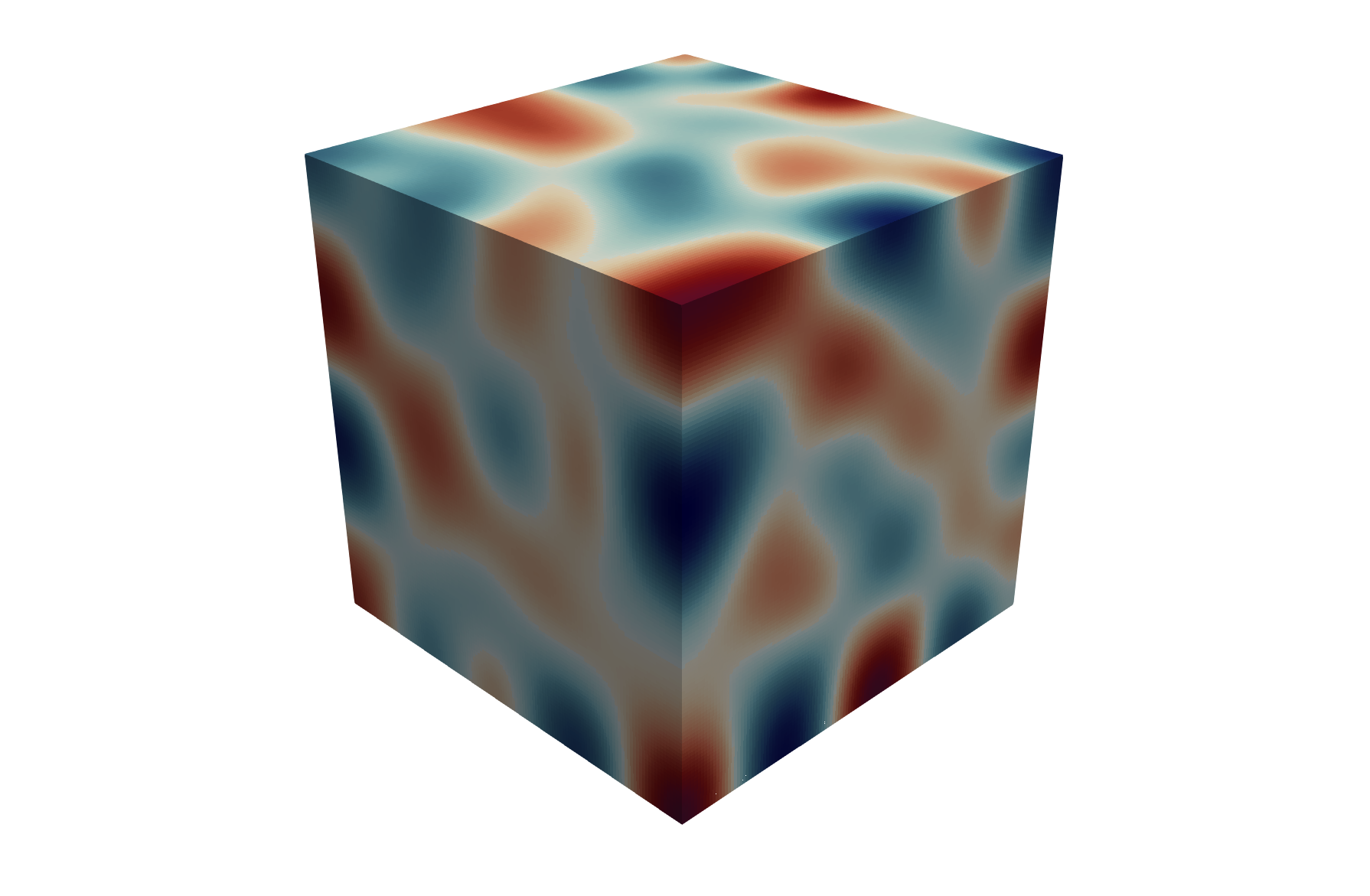}
    \hspace{-1.3cm}
    \includegraphics[width=0.40\linewidth]{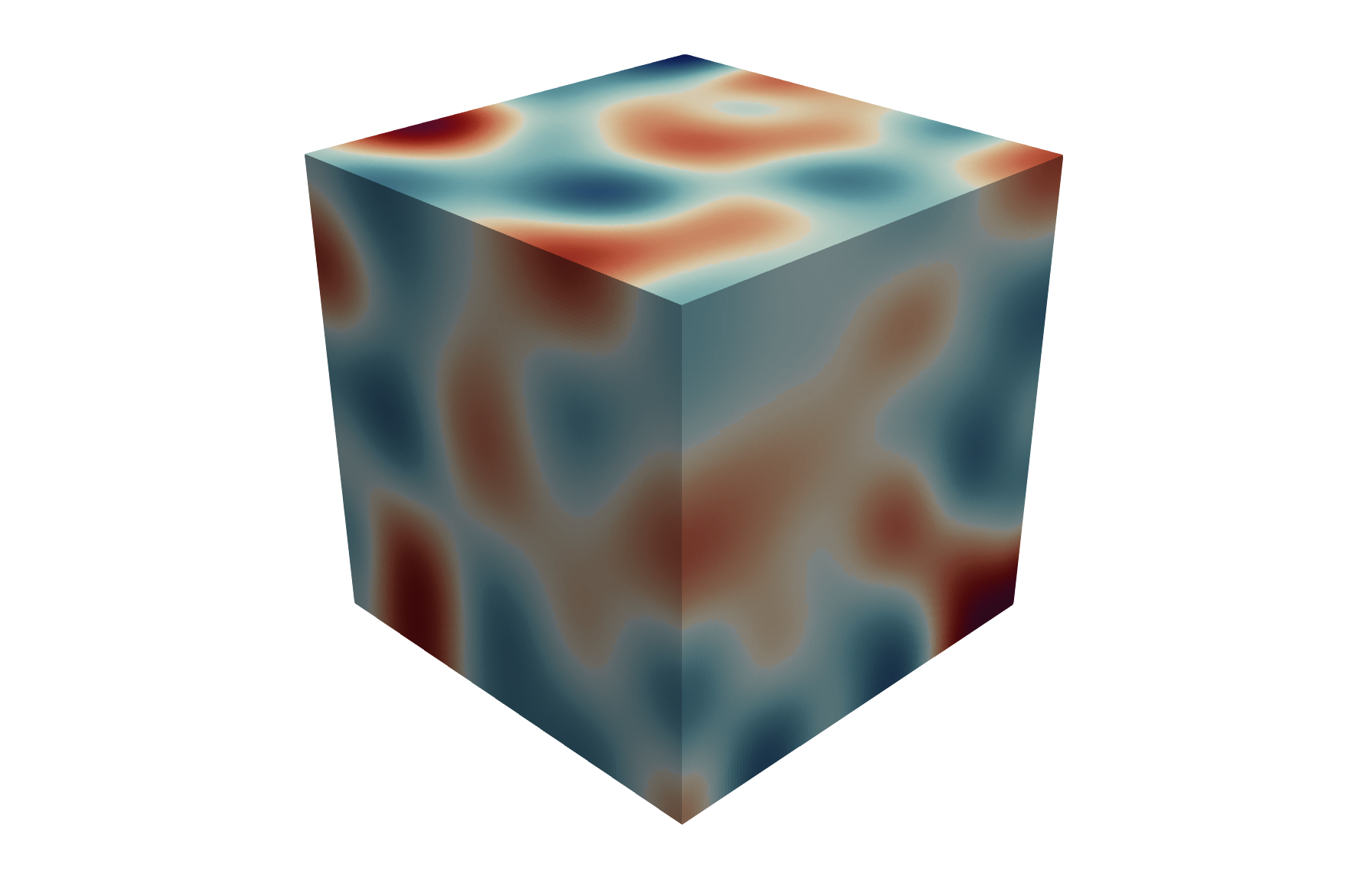}
    \caption{Two samples $\tilde{a}_1$ and $\tilde{a}_2$ of the random fields used for $A$.}
    \label{fig:samples}
\end{figure}

We use standard piecewise linear finite elements and
a multigrid preconditioned conjugate gradient method
to solve the PDEs, together with MLSGD method described in Algorithm~\ref{alg:mlsgd} and the
gradient estimator~\eqref{eq:gradient_estimator}.
We choose $\varepsilon_k=\eta \norm{\nabla \mathcal{J}^{\mathrm{ML}}(u_k)}_U$ with $\eta=0.9$ for the optimal batch size
in \eqref{eq:optimal-batch-size} to ensure, that the accuracy of Algorithm~\ref{alg:mlsgd}
is matched in~\eqref{eq:mse-estimate-mlmc}.
The target accuracy $\tilde{\varepsilon} > 0$ is chosen for all experiments such that
the total simulation time is four hours, which is achieved
through budgeting techniques as done in~\cite{baumgarten2025budgeted}.
If not stated otherwise, we use $P = 64$ CPUs to run the algorithm.

For the step size in Algorithm~\ref{alg:mlsgd} we use the adaptive procedure proposed
in~\cite{koehne2024adaptivestepsizespreconditioned}
\begin{equation}
    \label{eq:adaptive-step-size}
    \tau_{k} =
    \frac{\|{ \nabla \mathcal{J}^{\mathrm{ML}}(u_k)}\|_{U}^2 - \widehat{\mathrm{err}}^{\text{sam}}_k}
    {\widehat{L} \|{ \nabla \mathcal{J}^{\mathrm{ML}}(u_k)}\|_{U}^2}
    \quad \text{with} \quad
    \widehat{L} =
    \frac{\|{ \nabla \mathcal{J}^{\mathrm{ML}}(u_k)- \nabla \mathcal{J}^{\mathrm{ML}}(u_{k-1})}\|_{U}}
    {\|{t_{k-1} \nabla \mathcal{J}^{\mathrm{ML}}(u_{k-1})}\|_{U}},
\end{equation}
which is based on \eqref{eq:lipschitz_gradients}, however,
note that fixed step sizes satisfying  $\tau\in (0,\frac{c}{2L^2})$,
as derived in the proof of Theorem~\ref{thm:error}, work as well.

The goals of the numerical experiments are threefold:
(i) To explore the impact of $\theta$,
particularly in high risk-aversion regimes and in demanding computational settings.
(ii) To validate the presented convergence analysis of
the MLSGD method for risk-averse optimal control problems,
to verify the assumptions~\eqref{eq:strongerror}~\eqref{eq:weakerror},~\eqref{eq:weakerror2},~\eqref{eq:assumption_gamma}
as well as the results~\eqref{eq:decayDenom} and \eqref{eq:decayNum}
by measuring the multilevel rates $\alpha, \beta, \gamma$.
(iii) To investigate the performance of the method in a high-performance computing environment
and to evaluate the proposed batch parallelization and control update
in Subsection~\ref{subsec:batch-parallelization}.

    \subsection{Experiments on risk-aversion parameter}
\label{subsec:risk-scaling}

We apply the algorithm to the optimal control problem
described in the previous paragraph while varying the risk-aversion parameter
$\theta \in \set{1.0, 8.0, 16.0, \dots, 72.0}$.
Figure~\ref{fig:theta-experiment} shows three quantities
as functions of $\theta$: the value of the objective
function~\eqref{eq:cost} on the left, the $L^2$ norm of the
gradient estimate~\eqref{eq:gradient_estimator} at the last iterate in the center,
and the total number of optimization steps completed
within four hours of computation on the right.

We observe that, over this range of risk-aversion parameters,
the objective exhibits an approximately linear dependence on $\theta$,
particularly for smaller values of $\theta$.
For larger values, the relationship becomes noisier.
This behavior is consistent with the structure
of the entropic risk functional,
which can be seen by a Taylor expansion of~\eqref{eq:entropic-risk}
with respect to $\theta$, neglecting the difference between
the different controls found by each run.

An explanation for the dependence of the gradient norm
estimate on $\theta$ can be found in Algorithm~\ref{alg:mlsgd}.
Increasing $\theta$ leads to larger batch sizes
$\tset{M_{k,\ell}}_{\ell=1}^{L_k}$ in each iteration
to match the iteration dependent target $\varepsilon_k$.
This in turn reduces the number of optimization steps
that can be performed within the fixed computational time
budget of four hours.
Hence, this also explains the decrease in the total number of
iterations observed in the right plot of
Figure~\ref{fig:theta-experiment}.

\begin{figure}
    \centering
    \includegraphics[width=1.0\linewidth]{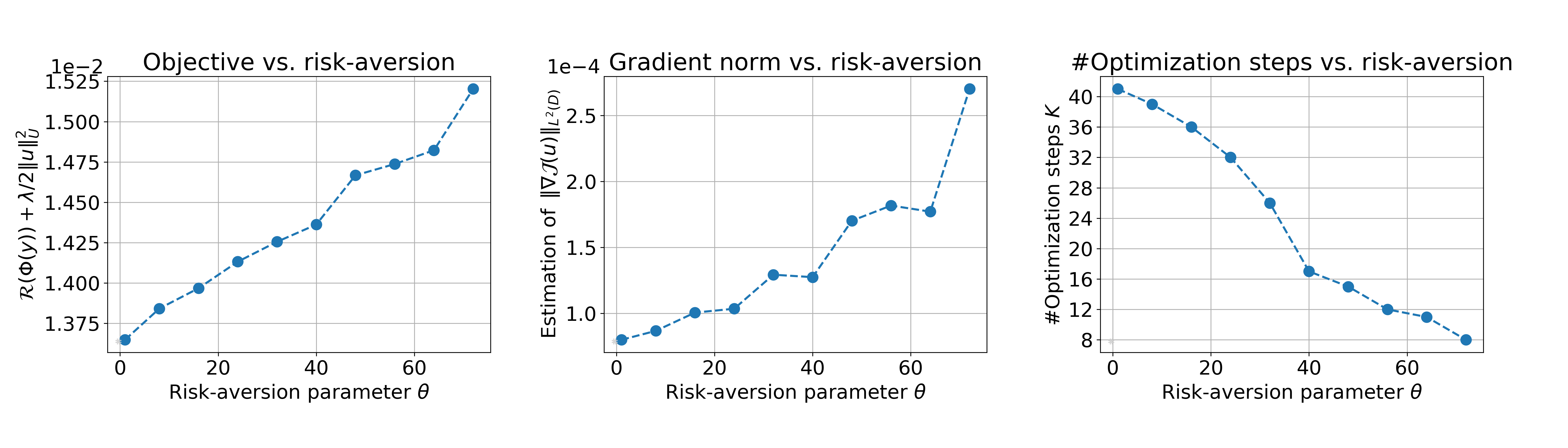}
    \caption{
        Objective~\eqref{eq:cost} (left),
        norm of estimate to gradient~\eqref{eq:exact-gradient} (center),
        total number of optimization steps taken in four hours (right)
        plotted over an increasing risk-aversion parameter $\theta$.
    }
    \label{fig:theta-experiment}
\end{figure}

To illustrate why the batch sizes
$\tset{M_{k,\ell}}_{\ell=1}^{L_k}$
increase with larger $\theta$, we selected three
experiments with $\theta = 8.0$, $\theta = 32.0$, and
$\theta = 72.0$, and plotted the total number of samples,
summed over all iterations $\tset{\sum_k M_{k,\ell}}_{\ell=1}^{L_k}$,
against the discretization level
in the top-left panel of Figure~\ref{fig:exponents}.
Note that all experiments were initialized with the batch sizes
$\tset{128,16,2}$ on unit cubes refined four, five, and six
times, respectively.
Despite the decrease in the total number
of iterations (cf.~Figure~\ref{fig:theta-experiment} on the right),
the total number of samples computed on each
level remains comparable across all values of $\theta$.
Thus, fewer optimization steps are
compensated through larger batches resulting
in the same amount of total samples.

Below this on the left of Figure~\ref{fig:exponents},
we plot the averaged computational time cost
$\mathcal{C}_\ell$ over the discretization levels,
confirming assumption~\eqref{eq:assumption_gamma}.
This quantity is used in~\eqref{eq:optimal-batch-size}
to determine the optimal batch size and is clearly
independent of $\theta$.
This in turn implies that~\eqref{eq:sec-order-sum}
must depend on $\theta$ as it is also influenced by
the estimates~\eqref{eq:decayDenom} and~\eqref{eq:decayNum},
both of which involve constants that depend on $\theta$.

To verify this, we collected measurements from the final
gradient batch across all three experiments and plotted them
in the center column of Figure~\ref{fig:exponents}.
Both estimates,~\eqref{eq:decayDenom} and~\eqref{eq:decayNum},
are clearly satisfied, and the experiments
highlight the dependence of the constants $C_Y$ and $C_X$ on $\theta$.
We note that the measurements of $\beta$ exhibit visible
measurement uncertainty since only a single batch is used
(see also Figure~\ref{fig:nodes-experiment} bottom row in the center).

In the right column of Figure~\ref{fig:exponents},
we plot the estimates
$\|\EE^{\text{MC}}_k[\Delta X_\ell]\|$ and $|\EE^{\text{MC}}_k[\Delta Y_\ell]|$,
which appear in~\eqref{eq:squared-bias-estimate-mlmc}.
These plots confirm the assumptions
in~\eqref{eq:weakerror} and~\eqref{eq:weakerror2},
while again illustrating the dependence of the constants
$\widetilde{C}_Y$ and $\widetilde{C}_X$ on $\theta$.
Since these assumptions enter the algorithm through
the estimator~\eqref{eq:squared-bias-estimate-mlmc},
they strongly influence when an additional mesh refinement
to level seven is triggered.
In particular, larger values of
$\widehat{\mathrm{err}}^{\text{num}}_k$
lead to earlier refinement by appending another level.
From the logged data, we observe that for $\theta = 8.0$
this refinement occurs after approximately
7500 seconds of computation time,
for $\theta = 32.0$ after roughly 4500 seconds,
and for $\theta = 72.0$ an additional level is selected
already after 12 seconds, immediately following
the first iteration.

\begin{figure}
    \centering
    \includegraphics[width=1.0\linewidth]{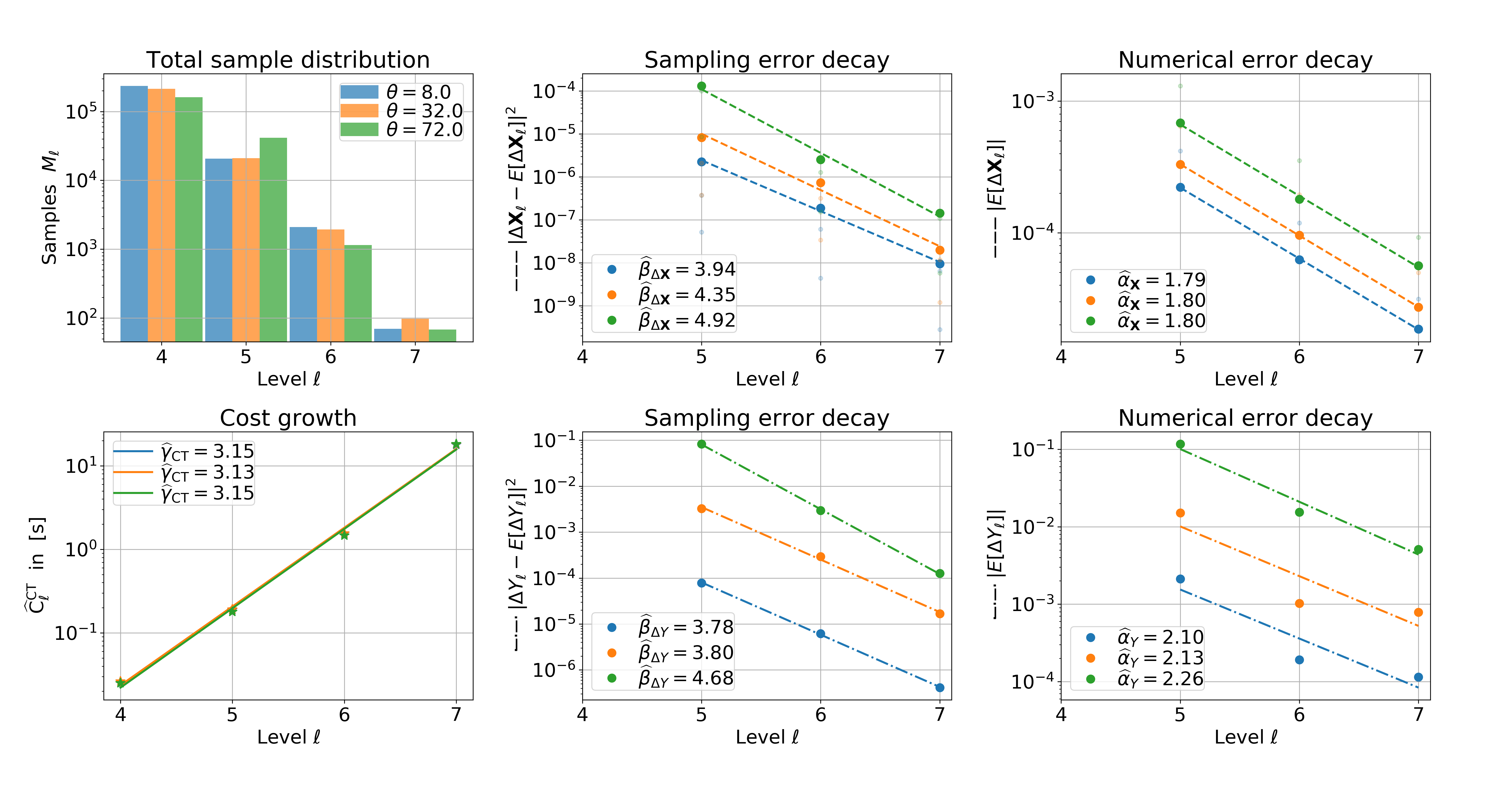}
    \caption{
        Total number of samples $\tset{\sum_k M_{k,\ell}}_{\ell=1}^{L_k}$ (top left),
        verification of~\eqref{eq:assumption_gamma} (bottom left),
        verification of~\eqref{eq:decayDenom} and \eqref{eq:decayNum} (center column),
        verification of~\eqref{eq:weakerror} and \eqref{eq:weakerror2} (right column)
        for three different risk-aversion parameters $\theta \in \set{8.0, 32.0, 72.0}$
    }
    \label{fig:exponents}
\end{figure}
    \subsection{Experiments on node scaling}
\label{subsec:experiments-on-node-scaling}

To compensate for the deterioration of the numerical
error estimates (cf.~the right column of
Figure~\ref{fig:exponents}), the sampling error estimates
(cf.~the center column of Figure~\ref{fig:exponents}),
and the gradient estimates
(cf.~the center plot of Figure~\ref{fig:theta-experiment})
as $\theta$ increases, we keep the total computational
time fixed at four hours while increasing the available
computational resources.
More precisely, we successively
double the number of CPUs employed, starting from
$P = 64$ up to $P = 2048$, to solve the optimization problem,
focusing on the case $\theta = 40$.
Figure~\ref{fig:nodes-experiment} illustrates this scaling experiment,
using blue for $P=128$, orange for $P=512$ and green for $P=2048$,
while experiments using $P \in \set{64, 256, 1024}$
are colored in gray in the lower
right plot to avoid overloaded figures.

The plot on the top left again shows the total number of samples on each level.
Clearly, the additional computational power is used to explore more samples
and, for the case $P \geq 512$, also an additional level.
This observation is further reflected in the bottom-left panel of
Figure~\ref{fig:nodes-experiment}, where we illustrate
the final multiindex set used in the optimization.
As explained in Subsection~\ref{subsec:batch-parallelization},
equation~\eqref{eq:optimal-parallelization} enables
larger batches when more processing capacity is available.
Note that the illustrated multiindex set changes since
$\set{M_{k,\ell}}_{\ell=1}^{L_k}$ varies by~\eqref{eq:optimal-batch-size}
throughout the optimization and that we only show the one
used in the final step.

The increased computational capacity enables
the exploration of more samples and additional levels,
thereby reducing the uncertainty in the final
objective~\eqref{eq:cost} without substantially
changing its value.
This can be observed in the center plot of the
top row of Figure~\ref{fig:nodes-experiment},
where we depict~\eqref{eq:cost}
with error bars based on the square root
of~\eqref{eq:mse-estimate-mlmc}.
The results confirm two points:
first, the control update and parallelization
strategies described in
Subsections~\ref{subsec:batch-parallelization}
work as intended, since otherwise different
numbers of CPUs could lead to different objective values,
a phenomenon commonly encountered in the
development process of the method;
second, the error estimates decrease with increasing computational
resources, as indicated by the shrinking error bars.

The plot on the top right of
Figure~\ref{fig:nodes-experiment}
further supports this observation.
There, we plot the norm of the estimated
gradient~\eqref{eq:gradient_estimator}
against the total computing time in a log--log scale,
omitting the initial optimization steps in the preasymptotic regime,
and demonstrating that more CPUs indeed lead to smaller gradient norms.
By Theorem~\ref{thm:complexity_gradient_estimation},
the inverted complexity estimate~\eqref{eq:cost_asymptotic}
yields $\varepsilon \lesssim_{\theta} \mathcal{C}^\delta$
with $\delta = \min \tset{\tfrac{1}{2}, \tfrac{\alpha}{2 \alpha + (\gamma - \beta)}}$,
excluding the case $\beta = \gamma$.
Since this relation must hold for every iteration $k$,
and since $\varepsilon_k$ controls the gradient norm,
the observed behavior confirms~\eqref{eq:cost_asymptotic},
with an estimated convergence rate of
$\delta \approx 0.33$ obtained from a least-squares fit.
To explain why we observe only
$\delta \approx 0.33$ rather than
$\delta = 0.5$ (as in two-dimensional test cases not illustrated in this paper),
we refer to the two remaining plots in the bottom row.

The center plot in the bottom row of
Figure~\ref{fig:nodes-experiment}
may explain the reduced convergence rate.
Here, we illustrate the estimated exponents
$\alpha$, $\beta$, and $\gamma$ over the iteration index $k$.
While the estimates for $\alpha$ and $\gamma$
remain largely stable throughout the optimization,
with only minor deflections when additional
levels are added or communication indices are switched,
the estimates for $\beta$ are considerably noisier.
For some iterates, the decay of the sampling error
is smaller than the corresponding cost increase $\gamma$,
such that we ultimately end up in the regime
$\gamma > \beta$, leading to reduced convergence rates in $\delta$.
This observation is also consistent with classical MLMC
theory, where achieving a convergence rate of
$\delta = 0.5$ is more difficult in the three-dimensional case
(cf.~\cite{charrier2013finite} for further discussion).

Lastly, the plot in the bottom right shows again the
gradient norm of the last iterate against the used number of CPUs.
While we can not reproduce the same convergence rate $\delta$
by doubling the amount of CPUs as we would, if we double the amount of time,
diminishing returns can not be observed within the span up to $P=2048$ yet.

\begin{figure}
    \centering
    \includegraphics[width=1.0\linewidth]{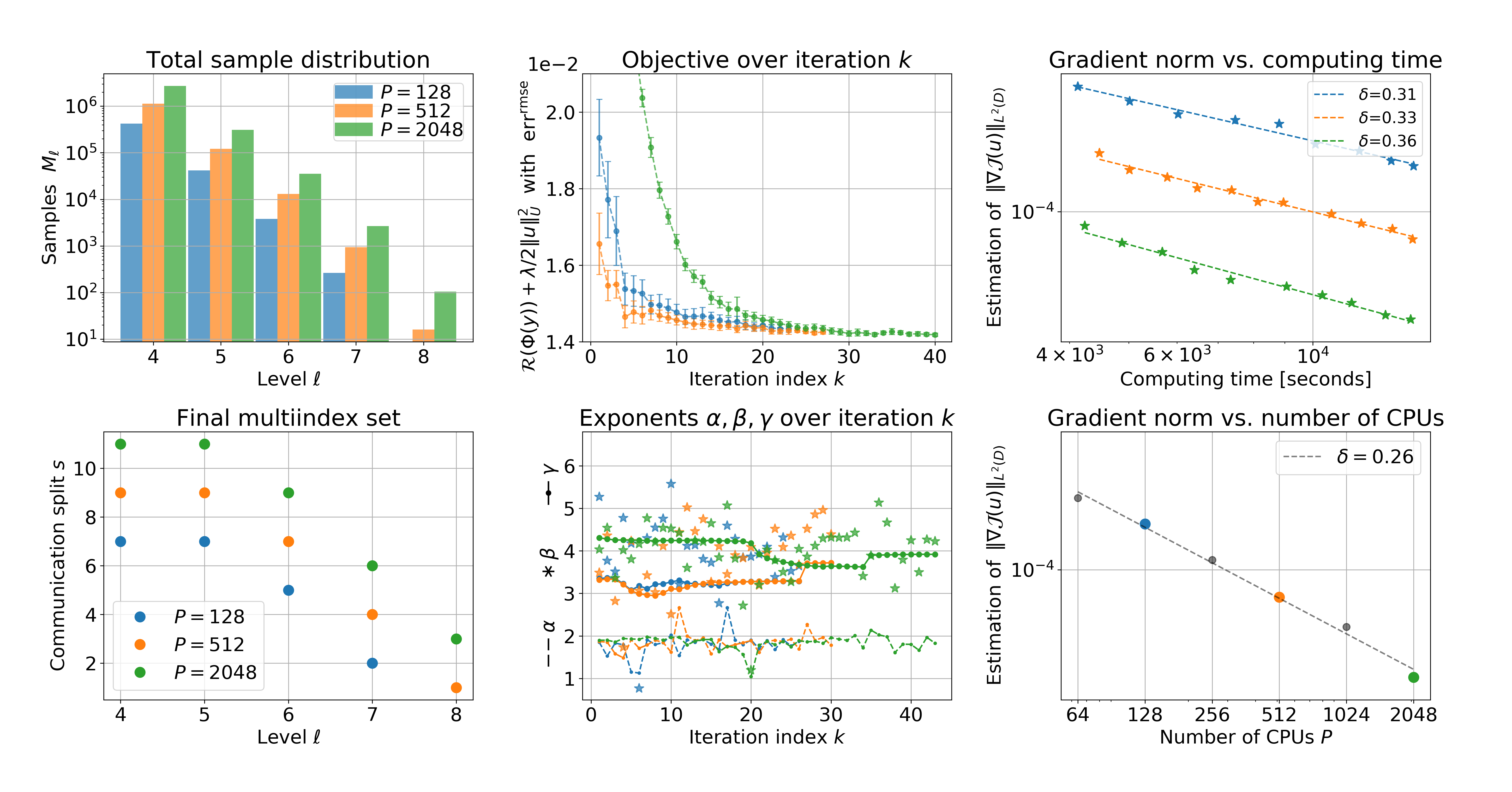}
    \caption{
        Total number of samples $\tset{\sum_k M_{k,\ell}}_{\ell=1}^{L_k}$ (top left),
        estimated objective~\eqref{eq:cost} with root of MSE error bars~\eqref{eq:mse-estimate-mlmc}
        plotted over the iteration $k$ (top center),
        norm of estimated gradient~\eqref{eq:gradient_estimator} plotted over the total computing time (top right),
        communication split~\eqref{eq:optimal-parallelization} over the used levels in the final iteration (bottom left),
        verification of~\eqref{eq:decayNum}-\eqref{eq:assumption_gamma} over all iterations (bottom center),
        norm of estimated gradient~\eqref{eq:gradient_estimator} of the last iteration
        over the used number of processing units (bottom right).
    }
    \label{fig:nodes-experiment}
\end{figure}
    \section{Outlook and Conclusion}
\label{sec:outlook-and-conclusion}

We have demonstrated that MLSGD is applicable to
risk-averse optimization, particularly in computationally demanding settings
with high risk-aversion and three-dimensional PDE constraints.
To this end, we developed a new convergence analysis,
incorporating a multilevel gradient estimator for the entropic risk,
as well as new parallelization strategies for the control updates,
achieving minimal communication overhead and multilevel complexity
results in theory and practice.

Future work may target other PDE constraints,
e.g., acoustic wave equations as in~\cite{baumgarten2024fully},
or risk measures such as conditional value at risk~\cite{rockafellar2000optimization},
as in~\cite{pieraccini2025adaptive},
supported by additional importance sampling techniques.
One major bottleneck for scaling to even larger problems
is the increased memory footprint.
Our investigations show that, in particular,
the multigrid preconditioner, in combination with the multiindex
data structure, leads to elevated memory usage.
Offloading the algebraic workload associated with solving
the state and adjoint equations to GPUs, e.g., by using Gingko~\cite{anzt2022ginkgo},
is a promising way to alleviate this memory constraint.
Furthermore, as the numerical results illustrate,
the overall procedure is limited in complexity
by sampling rather than by the optimization itself.
Hence, improving the sampling complexity,
e.g., with quasi-Monte Carlo methods as in~\cite{guth2021quasi, guth2024parabolic},
is a promising direction to further improve the convergence rate.
    \section*{Acknowledgements}
Parts of the presented work were developed during the
Junior Trimester Program \textit{Computational multifidelity, multilevel, and multiscale methods}
funded by the Deutsche Forschungsgemeinschaft
(DFG, German Research Foundation)
under Germany's Excellence Strategy – EXC-2047/2 – 390685813.

The authors gratefully acknowledge the computing time provided
on the high-performance computer HoreKa by the
National High-Performance Computing Center at KIT (NHR@KIT).
This center is jointly supported by the Federal Ministry of Education
and Research and the Ministry of Science, Research
and the Arts of Baden-Württemberg,
as part of the National High-Performance Computing (NHR)
joint funding program.
HoreKa is partly funded by the German Research Foundation (DFG).
TV has been partially supported by the INdAM-
GNCS project GNCS 2026 - CUP\_E53C25002010001.

The authors are grateful to Simon Weissmann for carefully
reading an earlier version of this manuscript and for bringing
to our attention an issue in the original proof,
which led to the present formulation of the arguments.
    \bibliographystyle{ieeetr}
    \bibliography{bibliography}

\end{document}